%% file: main.tex
\documentclass[letterpaper,11pt,reqno]{amsart} 
\usepackage[portrait,margin=1in]{geometry} 
\usepackage{cite} 
\include{style}

\begin{document}
\title{Interface between competing random walks on a cycle}
\author[Chatterjee]{Shirshendu Chatterjee}
\address{
Department of Mathematics\\
The City College of New York and CUNY Graduate Center\\
New York, NY,
USA
}
\email{shirshendu@ccny.cuny.edu}
\author[Nabahi]{Nadya Nabahi}
\address{Nadya Nabahi\\
The City College of New York\\
New York, NY,
USA
}
\email{nnabahi000@citymail.cuny.edu}
\author[Terlov]{Grigory Terlov}
\address{
Grigory Terlov \\
Department of Statistics and Operations Research\\
University of North Carolina\\
Chapel Hill, NC \\
USA
}
\email{gterlov@unc.edu}

\begin{abstract}
We consider a competition between two independent random walks on a cycle of length $N$. Each vertex is claimed by the walker that visits it first, and remains claimed thereafter. We prove that if the initial distance between the walkers is $d$, then the expected number of edges whose endpoints are claimed by different walkers is of order $\ln(1+N/d).$
This confirms the logarithmic dependence on $N/d$ predicted in Gomes Jr.\ et al.\ \cite{GLDSH96}.
\end{abstract}
\maketitle
\section{Introduction}
Random walk is one of the simplest mathematical models of diffusion and a cornerstone of modern probability theory. The set of distinct vertices visited by the walker up to a certain time, known as the \textbf{range}, induces a random growth process. For a random walk on $\bZ^d$, the asymptotics of the mean and fluctuations of its range are well studied, e.g.\ \cite{DE51,JO68,JP71,JP70recur,JP70clt,LeGall,BCR09}, with the discrete torus $\bZ^d/n\bZ^d$ treated in \cite{DK21}, and percolative properties in \cite{Sznitman10, TW11}. Interpreting the range of a random walk as explored territory made it a quantity of interest in population ecology \cite{Bansaye24,benichou2014depletion,regnier2023universal,regnier2024maximum}. 

Motivated by the exploration interpretation of the random walk, a natural next step is to consider several walkers simultaneously discovering the network. The simplest models, where the walkers do not interact with each other, turn out to be already significantly more intricate than the classical setting and became an active subject of study only in the last few decades. To our knowledge, the first such model was the collaborative dynamic, introduced in \cite{Larralde92}, where a vertex is considered explored if at least one walker visited it. 
This direction subsequently received considerable attention through the study of cover and hitting times \cite{Alon_at_al,sauerwald2010expansion,patel2016hitting,ivaskovic2017multiple,elsasser2011tight,efremenko2009well}, motivated in part by distributed and parallel computation; recent probabilistic treatments include \cite{rivera2023multiple,Hermon25,CollabRW}.

In this paper, we are interested in the competition between random walks, where each vertex is labeled by the walker that visited it first. This model is a natural proxy for competition for territory and resources in ecological systems.
As with the collaborative model, it originated in the physics literature \cite{GLDSH96} and has since attracted considerable interest in probability theory \cite{Dicker06,Miller13,Baccara26}.
While most of the literature is concerned with the proportion of the graph claimed by each walker, few results address the geometry of the competing ranges or the relation between them. 
A natural starting point is to understand the border between 
the sets of vertices claimed by each walker, known as the \textbf{interface}.
Formally, for graph $G=(V,E)$ and a subset of edges on $A\subseteq E(G)$, the interface induced by the competition of two random walks in $A$, denoted by $\cI_{A}$, is defined as the subset of edges in $A$ whose endpoints are claimed by different walkers. 
Its size, denoted by $\abs{\cI_{A}}$ is then a counting statistic indicative of the geometry of the underlying random growth process. It was already considered in \cite{GLDSH96} for two random walks on a cycle, however due to technical difficulty of the problem, even in this arguably simple case, Gomes Jr.\ et al.\ opted for a simulation-based prediction.
They found, surprisingly, that for two walkers started at distance $d$ on a cycle of length $N$ the expected size of the interface is of order $\ln\left(1+{N}/{d}\right)$. The ranges therefore tend to interleave rather than dividing the cycle into two arcs, and the growth of the interface size depends on $N$ and $d$ only through their ratio. Our main result confirms their prediction.

\begin{thm}\label{thm:main}
     Let $d,N\in\bN$ such that $d\le N/2$ and $C_N$ be a cycle of length $N$. Consider two independent continuous-time simple symmetric random walks on $C_N$ started at distance $d$. Then there are universal constants $0<c\le C<\infty$ such that
    \[
      c\ln\left(1+\frac{N}{d}\right)\le \E\abs{\cI_{C_N}}\le C\ln\left(1+\frac{N}{d}\right).
    \]
\end{thm}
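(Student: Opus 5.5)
By linearity of expectation, $\E\abs{\cI_{C_N}}=\sum_{e}\P(e\in\cI)$, so the plan is to estimate, for each edge $e=\{v,v+1\}$, the probability that its two endpoints are claimed by different walkers.

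Place walker $X$ at $0$ and walker $Y$ at $d$. This splits the cycle into two arcs: the short arc $[0,d]$ and the long arc of length $N-d\ge N/2$. Index an edge by its distance $k$ to the nearer starting point. The goal is to show:
\begin{itemize}
\item for edges on the short arc, or at distance $k\lesssim d$ from a start, $\P(e\in\cI)\lesssim 1/d$ (in the bulk of the short arc) plus $O(1)$ total near the endpoints;
\item for edges on the long arc with $d\le k\le N/2$, $\P(e\in\cI)\asymp 1/k$.
\end{itemize}
Summing, the short arc contributes $O(1)$ and the long arc contributes $\sum_{k=d}^{N/2} 1/k\asymp \ln(N/d)$. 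When $N/d$ is bounded, the $O(1)$ lower bound comes from a single edge near the midpoint having probability bounded below, so in all cases the total is $\asymp\ln(1+N/d)$.

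Key heuristic for the long arc: consider an edge at distance $k$ from $X$, with $k\ge d$. Its endpoints are claimed differently essentially only if the first walker to reach the edge region leaves one endpoint unvisited and the other walker arrives first at that endpoint.

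Concretely, suppose the first walker to arrive, say $X$, hits vertex $v$ at time $\tau$. At that moment $Y$ sits at a random position in the neighbourhood of the edge. Since $Y$ started at distance $\asymp k$ and the relevant time scale is $k^2$, the conditional probability that $Y$ reaches $v+1$ before $X$ takes the one further step is of order $1/k$. This is a gambler's-ruin estimate: $X$ at $v$ must not step to $v+1$ before $Y$ covers its remaining gap, and the probability that $Y$'s location at time $\tau$ lies within $O(1)$ of $v+1$ is a local-CLT probability of order $1/k$. Here one should keep track of which walker is ahead.

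I would make this rigorous as follows.

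\textbf{Upper bound.} Decompose $\{e\in\cI\}$ according to which walker visits the edge first. On $\{X$ first hits $v$ at time $\tau$, $v+1$ unvisited$\}$, apply the strong Markov property at $\tau$. The event then requires $Y$ to hit $v+1$ before $X$ does. Given $Y_\tau=y$ at distance $j$ from $v+1$, this has probability at most $C/j$, by comparing the difference walk $X-Y$ (a walk with doubled rate) to gambler's ruin. It then suffices to show $\E[1/(1+\text{dist}(Y_\tau,v+1))]\lesssim 1/k$. This follows from a local bound $\P(\text{dist}(Y_\tau,v+1)=j)\lesssim 1/k$ uniformly in $j$. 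That local bound comes from $\tau$ being the hitting time of $X$, independent of $Y$, together with a heat-kernel bound $p_t(x,y)\lesssim t^{-1/2}$ integrated against the law of $\tau$, which lives on scale $k^2$ with tail $\P(\tau\le s)$ controlled. One must also handle $\tau$ small. For $\tau\le \delta k^2$, the walker $Y$ is unlikely to be near, since $Y$ started at distance $\gtrsim k$ from $v$, so $\P(\text{dist}\le k/2)$ is tiny by a Gaussian tail. The contribution of the full range $\sum_j \P(\mathrm{dist}=j)/j$ then gives $\log k/k$. To avoid that loss, use the sharper fact that the hitting law of $\tau$ has density $\lesssim k^{-2}$ at times $t\asymp k^2$, so $\P(\mathrm{dist}=j)\lesssim k^{-1}\min(1,j/k)$-type bounds hold. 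This log-avoidance is the main technical obstacle, and I would first attack it via the joint process $(X,Y)$ as a 2D walk in a quadrant-like region, where the event amounts to hitting a corner; Green's-function estimates in 2D then give exactly $1/k$.

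\textbf{Lower bound.} Restrict to edges on the long arc at distance $k\in[d,N/4]$. Exhibit an explicit scenario: both walkers arrive in a window of size $k$ around the edge by time $k^2$ without either having visited it, which has probability bounded below by invariance principles. Then, with the two walkers at distance $O(1)$ of the two endpoints, a local-CLT estimate of probability $\gtrsim 1/k$ shows that each then grabs its own endpoint before the other crosses. Summing over $k$ gives $c\ln(N/d)$. For $N/d$ bounded, a single midpoint-edge scenario of constant probability gives $c$.
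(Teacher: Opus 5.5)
Your edge-by-edge route (strong Markov property at the first arrival, then an overtaking estimate) is genuinely different from the paper's, but the lower bound as written fails. For an edge $\{v,v+1\}$ on the long arc at distance $d\le k\ll N$, both walkers reach it from the same side. Having both of them within $O(1)$ of the edge while it is still unvisited therefore means the planar walk $(X,Y)$ comes within $O(1)$ of the corner $(v,v)$ before either coordinate hits $v$. From distance $\asymp k$, a right-angle corner is reached with probability $\asymp k^{-2}$, not $k^{-1}$. A one-walker local CLT gives $1/k$ only because it ignores that the trailing walker must not yet have visited $v$. Your own upper-bound input, $\pr(\mathrm{dist}(Y_\tau,v+1)=j)\lesssim k^{-1}\min(1,j/k)$, says the same thing at $j=O(1)$. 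So your scenario sums to $O(1)$ over $k$, not $\ln(N/d)$.

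The missing idea is that the $1/k$ is spread over all gaps $j\in[1,k]$. The leader reaches $v$ and then stalls behind the frontier, since one-step hitting times have tail $\asymp t^{-1/2}$. Meanwhile the trailer, at distance $j$ with probability $\asymp j/k^2$, overtakes with probability $\asymp 1/j$, so each scale $j$ contributes $\asymp k^{-2}$. A rigorous version of your approach would need matching \emph{lower} bounds on both factors and a sum over $j$. The paper avoids pointwise estimates altogether. It writes $\abs{\cI_{[0,M]}}$ as the number of sign changes of $Y_{U,n}=S_n-U$, whose increments $\tau_1-\tau_1'$ are heavy-tailed. In each dyadic block $\{m+1,\dots,2m\}$, a single increment of size $\asymp m^2$ (a stall) has probability bounded below and forces a sign change (Lemmas~\ref{lem:2_interval_lemma} and~\ref{lem:oneblock-crossing}).

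Your upper bound has the right skeleton, but its key step is unproved and the reason you give for it is wrong. A density $\lesssim k^{-2}$ for $\tau$ on $t\asymp k^2$ only yields the uniform bound $\pr(\mathrm{dist}=j)\lesssim 1/k$, which leads back to the $\log k/k$ you want to avoid. The factor $\min(1,j/k)$ instead comes from the survival constraint that $Y$ has not visited $v$ by time $\tau$. Concretely, you need a Dirichlet heat-kernel bound of the form $\pr_a(Y_t=v-i,\ t<\tau^Y_v)\lesssim\min(t^{-1/2},\,a\,i\,t^{-3/2})$, which you would have to prove and then integrate against the heavy-tailed law of $\tau$ (with $\pr(\tau>t)\asymp k\,t^{-1/2}$). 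The paper's ATD walk gets this for free because first-passage times already encode the constraint. In the time-difference variable the relevant density is flat, $f_n\le C(n+d)^{-2}$ (Lemma~\ref{lem:ATD_density}, via Lemma~\ref{lem:char_func_decay}). Heuristically, under $x\approx j^2$ this is exactly your $j/k^2$. Combined with $\pr(\zeta>\abs{x})\lesssim\abs{x}^{-1/2}$, it gives a crossing probability $\le C/(n-1+d)$ with no logarithmic loss.

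Finally, you never handle wrap-around on the cycle: a walker can reach a long-arc edge from the far side, or enter the short arc after going around. The paper controls this with the three-interval decomposition in Proposition~\ref{prop:cycle_upper_bound} and the gambler's-ruin events $G_m$ in Lemma~\ref{lem:cycle_block}.
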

In \cite{GLDSH96}, Gomes et al.\ simplified the analysis by assuming that $d=N/2$ and computed, for each vertex on a cycle, the probabilities that it is visited first by each of the two walkers. Their approach relied on a Green's function representation of first-visit probabilities. They noted that a similar analysis might be used to establish their prediction for the interface size, but did not pursue this direction rigorously because of the resulting technical complications.
Our approach is based on a different idea. Namely, we introduce a symmetric random walk on $\bR$ whose number of zero-crossings is precisely the size of the interface. In addition, to avoid tie-breaking, we work in continuous rather than discrete time, but this is a matter of technical convenience only.

\subsection{Organization} In Section~\ref{sec:prelim} we recall the basics of the continuous-time simple symmentic random walk on $\bZ$, related notation, and derive several axillary bounds. In Section~\ref{sec:ATD-walk} we introduce our main tool, the arrival-time difference walk on $\bZ$, using which we then derive upper and lower bounds on the expected size of the interface on an interval $[0,M]\subset\bZ$. Finally, in Section~\ref{sec:main-proof} we use these bounds to derive the main result.

\subsection{Acknowledgments}

The last author thanks Tyler M.\ Gall for insightful conversations about this problem and providing simulations during the undergraduate research project done at the University of Illinois. 

S.C.\ was supported in part by the NSF grant DMS-2154564.

N.N.\ was supported by the Rich internship at the mathematics department of CCNY.

G.T.\ was supported in part by the RTG award grant (DMS-2134107) from the NSF.

\subsection{Disclosure of generative-AI tool use.}
During the preparation of this work, we used generative-AI systems (OpenAI ChatGPT 5.5 and 5.6) to assist with literature discovery, the exploration of proof strategies, and language editing. In particular, an AI-assisted review identified a flaw in an earlier version of Section~\ref{sec:ATD_upper_bdd} and suggested an anti-concentration approach that we implemented here. All AI-generated suggestions were carefully fleshed out and checked by the authors, who take full responsibility for the results and the correctness of the proofs.

\section{Preliminaries and basic estimates}\label{sec:prelim}
In this section we introduce notation for simple random walks on $\bZ$ and their hitting times, recall basic facts, and provide some of the technical estimates that will be needed in the following sections.

Let $(X_t)_{t\ge0}$ denotes the continuous-time simple symmetric random walk on $\bZ$ with jump times given by independent mean-one exponential random variables.  Thus, when the walk jumps, it moves one unit to the right or one unit to the left, each with probability $1/2$.  
Let $J_n$, for $n\ge1,$ be the successive jump times, yielding the discrete-time simple random walk $X_{J_n}$.
We write $\pr_0$ and $\E_0$ for probability and expectation when $X_0=0$.
For $v\in\bZ$, define the random time time $\tau_v$ and the number of jumps $\wh{\tau}_v$ that it takes for $X_t$ to reach $v$, that is
\[
  \tau_v:=\inf\{t\ge0:X_t=v\}
  \qquad\mbox{and}\qquad
  \wh{\tau}_v:=\inf\{n\ge0:X_{J_n}=v\}
\]
with the convention $\inf\emptyset=\infty$. It is a standard fact that conditional on $\{\wh{\tau}_1=n\}$, the random variable $\tau_1$ has $\Gamma(n,1)$ distribution. In particular, $\tau_1$ enjoys many useful properties such as has bounded density the following Laplace transform: for every $z\in\bC$ with $\mathrm{Re}(z)\ge0$,
\begin{equation}\label{eq:tau_transform}
    \E_0 e^{-z\tau_1}=1+z-\sqrt{z(2+z)}.
\end{equation}
Here the square root is the branch obtained by continuation from the positive real axis; equivalently, it has positive real part when $\mathrm{Re}(z)>0$ and is extended continuously to $\mathrm{Re}(z)=0$.

We next state several axillary lemmas bounding tail probabilities of $\tau_1$ and related random variables. Some of these inequalities could be stated more generally, rather than with particular choice of constants. However, since derivation of such inequalities is routine we opted to state the exact statements we need instead of the most general form.

\textit{Throughout the paper, $c$ and $C$ denote positive constants whose values may change from one expression to the next; we distinguish them by subscripts or prime only where additional clarity is needed.}

\begin{lem}\label{lem:basic_bounds}
Let $\eta$ and $\eta'$ be independent random variables, each with the same distribution as $\tau_1$ under $\pr_0$. Then there are constants $0<c<C<\infty$ such that, for every $t\ge1$,
\begin{enumerate}
\item 
\[
    c t^{-1/2}\le \pr(\eta>t)\le C t^{-1/2}.
\]
\item 
\[
  c t^{-1/2}\le\pr\left(\frac32 t<\eta\le 2 t\right)\le C t^{-1/2}.
\]
\item 
\[
  ct^{-1/2} \le \pr(\eta-\eta'>t)\le C t^{-1/2}.
\]
\item
\[
  \E\left((\eta-\eta')^2\ind_{\{|\eta-\eta'|\le t\}}\right)\le C t^{3/2}.
\]
\end{enumerate}
\end{lem}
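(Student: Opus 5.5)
We derive all four estimates from the Laplace transform \eqref{eq:tau_transform}, together with the fact that $\tau_1$ has a bounded density.

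\textbf{Part (1).} For real $z=s\downarrow0$ we have
\[
1-\E_0e^{-s\tau_1}=\sqrt{2s}+O(s).
\]
Tauberian reasoning then gives $\pr(\eta>t)\sim c_0t^{-1/2}$, but we prefer elementary bounds valid for all $t\ge1$.

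For the upper bound, use $1-e^{-x}\ge(1-e^{-1})\ind_{\{x\ge1\}}$ with $s=1/t$:
\[
\pr(\eta>t)\le C\bigl(1-\E e^{-\eta/t}\bigr)\le C t^{-1/2}.
\]
For the lower bound, use $1-e^{-x}\le x\wedge 1$. This gives
\[
1-\E e^{-s\eta}\le \pr(\eta>At)+s\,\E[\eta\wedge At].
\]
Next, $\E[\eta\wedge At]=\int_0^{At}\pr(\eta>u)\,du\le C\sqrt{At}$ by the upper bound (and boundedness for $u\le1$). Taking $s=1/t$, we get $\sqrt{2/t}-Ct^{-1}\le \pr(\eta>At)+C\sqrt{A}\,t^{-1/2}\cdot t^{-1/2}\cdot t^{1/2}/\sqrt{t}$. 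To balance correctly, instead take $s=\delta/t$ with $\delta$ small. Then
\[
\pr(\eta>t)\ge \sqrt{2\delta/t}-C\delta t^{-1/2}-Ct^{-1}.
\]
For $\delta$ small this is $\ge c t^{-1/2}$ once $t$ is large. Bounded $t$ is handled by positivity and monotonicity.

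A cleaner alternative is the reflection principle for the embedded discrete walk: $\pr(\wh\tau_1>n)\asymp n^{-1/2}$. Combined with $\tau_1\mid\{\wh\tau_1=n\}\sim\Gamma(n,1)$ and Gamma concentration, this transfers to continuous time. I would use whichever is shorter, probably the discrete route.

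\textbf{Part (2).} This needs more precision than (1), since it is a difference of two tails. I expect this to be the main obstacle. I would use the discrete route. The exact formula
\[
\pr(\wh\tau_1=2k-1)=\frac{C_{k-1}}{2^{2k-1}}\asymp k^{-3/2}
\]
(Catalan numbers) shows that $\wh\tau_1$ has a local probability of order $n^{-3/2}$. The number of odd $n\in[\tfrac{3}{2}t+\sqrt t\,\cdot K,\,2t-K\sqrt t]$ is of order $t$. Gamma concentration places $\tau_1$ within $K\sqrt n$ of $n$ with high probability uniformly. This gives the lower bound $c\,t\cdot t^{-3/2}$; small $t$ is again handled by positivity. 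The upper bound follows from (1).

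\textbf{Part (3).} For the upper bound, note that $\{\eta-\eta'>t\}\subseteq\{\eta>t\}$. For the lower bound, use independence:
\[
\pr(\eta-\eta'>t)\ge\pr(\eta>2t)\,\pr(\eta'\le t)\ge c\,t^{-1/2}\cdot\pr(\eta'\le1)\ge c't^{-1/2}.
\]

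\textbf{Part (4).} Write $D=\eta-\eta'$. The layer-cake formula gives
\[
\E\bigl(D^2\ind_{\{|D|\le t\}}\bigr)\le\int_0^{t}2u\,\pr(|D|>u)\,du.
\]
By (3) and symmetry, $\pr(|D|>u)\le C(u^{-1/2}\wedge1)$, so the integral is at most $C+C\int_1^t u^{1/2}\,du\le Ct^{3/2}$.
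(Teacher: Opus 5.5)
Your proof is correct. Parts (3) and (4) coincide with the paper's argument; for (1) and (2) you take a different route.

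The paper quotes the classical density $f_{\tau_1}(t)=t^{-1}e^{-t}I_1(t)$ and the Bessel asymptotic $t^{3/2}f_{\tau_1}(t)\to 1/\sqrt{2\pi}$. Together with continuity and strict positivity of $f_{\tau_1}$ on $(0,\infty)$, this gives the \emph{local} estimate $f_{\tau_1}(u)\asymp u^{-3/2}$ for $u\ge1$. Both (1) and (2) then follow by integrating over $(t,\infty)$ and $(\tfrac32 t,2t]$.

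You instead get (1) from the Laplace transform alone. Once the garbled intermediate display is deleted, taking $s=\delta/t$ and cutting at $t$ gives $\pr(\eta>t)\ge(\sqrt{2\delta}-(C+1)\delta)\,t^{-1/2}$ for every $t\ge1$, so no separate small-$t$ case is needed. You get (2) from the discrete local estimate $\pr(\wh\tau_1=2k-1)=C_{k-1}2^{-(2k-1)}\asymp k^{-3/2}$, transferred to continuous time by Chebyshev concentration of $\Gamma(n,1)$.

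You correctly saw that (2) cannot be read off from the two-sided bounds in (1), since it is a difference of tails at the fixed ratio $4/3$. The paper avoids this only because its input is already a local (density) statement. Your route is more self-contained, using only Catalan asymptotics and the transform the paper already states, but it is longer; the paper's is essentially a citation.

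There is also a middle option. The Tauberian asymptotic $\pr(\eta>t)\sim\sqrt{2/\pi}\,t^{-1/2}$ that you mention at the outset follows from Karamata's theorem plus monotonicity of the tail. It already yields (2) for large $t$, with constant $\sqrt{2/\pi}\bigl(\sqrt{2/3}-\sqrt{1/2}\bigr)>0$, and bounded $t$ is handled by positivity.
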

\begin{proof}

It is classical that $\tau_1$ has density $f_{\tau_1}(t)=t^{-1}e^{-t}I_1(t),$ where $I_1(t)$ is the (modified) Bessel function, e.g.\ see \cite[eq.~(7.13), p.~60]{feller1991introduction}.
The standard asymptotic for $I_1(t)$ can be written as
\[
    \lim_{t\to\infty}\sqrt{2\pi t}\,e^{-t}I_1(t)=1,
\]
and thus
\[
    \lim_{t\to\infty}t^{3/2}f_{\tau_1}(t)=\frac{1}{\sqrt{2\pi}}.
\]
Since $f_{\tau_1}$ is continuous and strictly positive on $(0,\infty)$, this implies part (1). Part (2) follows immediately.
For part (3), observe that for $t\ge1$
\[
    ct^{-1/2}\le\pr(\eta>2t)\pr(\eta'\le1)\le\pr(\eta-\eta'>t)\le \pr(\eta>t)\le Ct^{-1/2}.
\]
Finally, by symmetry of $\eta-\eta'$ and part (3) we have that for $t\ge1$
\[
\pr(|\eta-\eta'|>t)=2\pr(\eta-\eta'>t)\le Ct^{-1/2}.
\]
Thus
\[
    \E\left((\eta-\eta')^2\ind_{\{\abs{\eta-\eta'}\le t\}}\right)
    \le2\int_0^t s\,\pr(|\eta-\eta'|>s)\,ds\le 2\int_0^1s\,ds+C\int_1^t s^{1/2}\,ds\le Ct^{3/2}.
\]
\end{proof}

Next lemma establishes a tail bound for the hitting times for $X_t$.

\begin{lem}\label{lem:tau-tail}
    There exist a universal constant $C>0$ such that for every $n\ge1$ and $x\ge1$ we have
    \[
    \pr(\tau_n>x)\le\frac{Cn}{\sqrt{x}}.
    \]
\end{lem}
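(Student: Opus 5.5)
By the strong Markov property at the successive first hitting times of $1,2,\dots,n$, the hitting time $\tau_n$ is a sum of $n$ independent copies $\eta_1,\dots,\eta_n$ of $\tau_1$ under $\pr_0$. The plan is to bound the tail of this sum by a truncation argument, using only the one-step tail from Lemma~\ref{lem:basic_bounds}(1), namely $\pr(\eta>t)\le Ct^{-1/2}$ for $t\ge1$.

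First dispose of the trivial range. If $x\le n^2$, then $Cn/\sqrt{x}\ge C\ge1$ once $C\ge1$, so the bound holds automatically. Hence we may assume $x>n^2$, so that $x/n\ge n\ge1$.

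Fix $x> n^2$ and truncate each summand at level $x$ (so the truncation level is at least $1$):
\[
  \pr(\tau_n>x)\le \pr\Bigl(\max_{i\le n}\eta_i>x\Bigr)+\pr\Bigl(\sum_{i=1}^n \eta_i\ind_{\{\eta_i\le x\}}>x\Bigr).
\]
The first term is handled by a union bound and Lemma~\ref{lem:basic_bounds}(1): it is at most $nCx^{-1/2}$. For the second term we use Markov's inequality together with
\[
  \E\bigl(\eta\ind_{\{\eta\le x\}}\bigr)\le\int_0^x\pr(\eta>s)\,ds\le 1+\int_1^x Cs^{-1/2}\,ds\le C'\sqrt{x}.
\]
This bounds the second term by $nC'\sqrt{x}/x=C'n/\sqrt{x}$. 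Adding the two terms gives $\pr(\tau_n>x)\le C''n/\sqrt{x}$, as claimed.

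No step here is a real obstacle; the only points needing care are the independence decomposition of $\tau_n$ and checking that the truncation level satisfies the hypothesis $t\ge1$ of Lemma~\ref{lem:basic_bounds}. An alternative route is the reflection principle for the embedded discrete walk combined with a large-deviation bound on the jump times. The truncation argument is simpler and uses only results already stated.
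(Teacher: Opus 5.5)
Your proposal is correct and follows the paper's proof: you write $\tau_n$ as a sum of $n$ i.i.d.\ copies of $\tau_1$, truncate at level $x$, union-bound the maximum, and apply Markov's inequality to the truncated sum using $\E(\eta\ind_{\{\eta\le x\}})\le\int_0^x\pr(\eta>s)\,ds\le C\sqrt{x}$. Your preliminary reduction to $x>n^2$ is harmless but unnecessary, since the hypothesis $x\ge1$ already puts the truncation level in the range where Lemma~\ref{lem:basic_bounds}(1) applies.
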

\begin{proof}
    By translation invariance and the strong Markov property,
    $
    \tau_n\stackrel{\mathrm d}{=}\eta_1+\cdots+\eta_n,
    $
    where $\eta_1,\ldots,\eta_n$ are independent copies of $\tau_1$ under $\pr_0$. From Lemma~\ref{lem:basic_bounds}.(1), we get that for some universal constant $C>0$ and every $x\ge1$ 
    \begin{align*}
    \pr(\tau_n>x)&\le\pr\left(\max_{1\le k\le n}\eta_k>x\right)+\pr\left(\sum_{k=1}^n\eta_k\ind_{\{\eta_k\le x\}}>x\right)\\
    &\le n\pr(\eta>x)+\frac{n}{x}\E\left(\eta_1\ind_{\{\eta_1\le x\}}\right)\\
    &\le n\pr(\eta_1>x)+\frac{n}{x}\int_0^x\pr(\eta_1>t)\,dt
    \le\frac{Cn}{\sqrt{x}}.
    \end{align*}
\end{proof}

Denote the characteristic function of $\tau_1$ by $\varphi(\theta):=\E_0e^{i\theta\tau_1}$ for $\theta\in\bR$.

\begin{lem}\label{lem:char_func_decay}
There is a universal constant $C>0$ such that, for every integer $k\ge2$,
\[
\int_{\bR}\abs{\varphi(\theta)}^k\,d\theta
\le \frac{C}{k^2}.
\]
\end{lem}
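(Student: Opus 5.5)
We use the explicit Laplace transform \eqref{eq:tau_transform} evaluated on the imaginary axis. For $\theta\in\bR$ set $z=-i\theta$, so $\varphi(\theta)=1-i\theta-\sqrt{-i\theta(2-i\theta)}$. The goal is a pointwise bound of the form
\[
\abs{\varphi(\theta)}\le \exp\bigl(-c\min(\abs{\theta}^{1/2},1)\bigr)\quad\text{for }\abs{\theta}\le \theta_0,
\qquad
\abs{\varphi(\theta)}\le \frac{C}{\abs{\theta}}\quad\text{for }\abs{\theta}\ge \theta_0 ,
\]
together with $\abs{\varphi}\le \rho<1$ on any compact set avoiding $0$. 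Integrating $\abs{\varphi}^k$ then gives the claim.

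\textbf{Small $\theta$.} Expand $\sqrt{-i\theta(2-i\theta)}=\sqrt{-2i\theta}\,(1+O(\theta))$, where $\sqrt{-2i\theta}=\sqrt{2\abs{\theta}}\,e^{\mp i\pi/4}$ with the sign matching that of $\theta$; this uses the branch with nonnegative real part. Hence
\[
\varphi(\theta)=1-\sqrt{\abs{\theta}}(1\mp i)+O(\abs{\theta}),
\]
and so $\abs{\varphi(\theta)}^2=1-2\sqrt{\abs\theta}+O(\abs\theta)$. It follows that $\abs{\varphi(\theta)}\le 1-c\sqrt{\abs\theta}\le e^{-c\sqrt{\abs\theta}}$ for $\abs\theta\le\theta_0$. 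Then
\[
\int_{\abs\theta\le\theta_0}e^{-ck\sqrt{\abs\theta}}\,d\theta\le 2\int_0^\infty e^{-ck\sqrt u}\,du=\frac{4}{c^2k^2},
\]
which is exactly the $k^{-2}$ rate. This is the main contribution and the reason for the exponent $2$.

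\textbf{Large $\theta$.} Write $\varphi(\theta)=1+z-\sqrt{z^2+2z}=\frac{1}{1+z+\sqrt{z^2+2z}}$ (multiply by the conjugate; the product is $1$). For $z=-i\theta$ with $\abs\theta$ large, $\sqrt{z^2+2z}=z\sqrt{1+2/z}$ with the correct branch equals $\pm(z+1)+O(1/\abs\theta)$. The branch choice makes the denominator $\approx 2z$ rather than $\approx 0$; one should check this against $\abs{\varphi}\le1$, which forces the large-modulus option. Hence $\abs{\varphi(\theta)}\le C/\abs\theta$ for $\abs\theta\ge\Theta$. Alternatively, this follows by integration by parts, since the density $f_{\tau_1}$ vanishes at $0$, is smooth, and has integrable derivative by the Bessel representation in Lemma~\ref{lem:basic_bounds}. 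For $k\ge2$,
\[
\int_{\abs\theta\ge\Theta}\abs{\varphi}^k\,d\theta\le \int_{\abs\theta\ge\Theta}(C/\abs\theta)^{k}\,d\theta\le 2\Theta\,(C/\Theta)^{k},
\]
which is exponentially small in $k$ once $\Theta>2C$. It is therefore $O(k^{-2})$.

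\textbf{Middle range and main obstacle.} On $\theta_0\le\abs\theta\le\Theta$, $\tau_1$ has a density, so it is non-lattice and $\abs{\varphi(\theta)}<1$ for $\theta\ne0$. By continuity, $\sup\abs\varphi=\rho<1$ on this compact set, and the contribution is at most $2\Theta\rho^k=O(k^{-2})$. The main obstacle is purely the careful branch bookkeeping in the small-$\theta$ expansion, specifically verifying that the real part of $\sqrt{-i\theta(2-i\theta)}$ is $\asymp\sqrt{\abs\theta}$ with the right sign so that $\abs\varphi$ drops below $1$ at rate $\sqrt{\abs\theta}$. This is also consistent with the tail $\pr(\tau_1>t)\asymp t^{-1/2}$ of Lemma~\ref{lem:basic_bounds}.(1), which corresponds to a one-sided stable-$1/2$ characteristic function near the origin.
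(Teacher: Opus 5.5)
Your proposal matches the paper's proof: the same three-region split, namely $\abs{\varphi(\theta)}\le e^{-c\sqrt{\abs\theta}}$ near $0$ (which gives the $k^{-2}$ rate), $\sup\abs{\varphi}=\rho<1$ on the compact middle range by absolute continuity, and $\abs{\varphi(\theta)}\le C/\abs\theta$ at infinity by rationalizing \eqref{eq:tau_transform}; your branch bookkeeping supplies details the paper leaves implicit. One small slip in your optional integration-by-parts remark: $f_{\tau_1}(0^+)=1/2$, not $0$. This is harmless, because the boundary term $f_{\tau_1}(0^+)/\abs\theta$ is itself $O(1/\abs\theta)$.
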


\begin{proof} As above we adapt the convention where  the square root is taken with nonnegative real part.
From \eqref{eq:tau_transform}, with $z=i\theta$ for $\theta\in\bR$, it follows that there are positive constants $c$ and $\delta$ such that for any $\abs{\theta}\le\delta$ the inequality $\abs{\varphi(\theta)}\le\exp(-c\sqrt{\abs{\theta}})$ holds.
Hence
\[
\int_{-\delta}^{\delta}\abs{\varphi(\theta)}^k\,d\theta\le C\int_0^\infty e^{-ck\sqrt{\theta}}\,d\theta\le\frac{C_1}{k^2}.
\]

Since $\tau_1$ has a absolutely continuous distribution, $\abs{\varphi(\theta)}<1$ for $\theta\neq0$.
In particular, on every compact set bounded away from $0$, $\abs{\varphi}$ is bounded above by a constant strictly smaller than $1$.

Finally, rationalizing the expression in \eqref{eq:tau_transform} gives $\abs{\varphi(\theta)}\le {C_2}/{\abs{\theta}}$
for all sufficiently large $\abs{\theta}$. 
Choose a constant $R$ large enough such that this inequality holds and $R>C_2$, choose $\delta$ as above, and define
$$
\rho:=\sup_{\delta<\abs{\theta}\le R}\abs{\varphi(\theta)}\in(0,1).
$$
Then
\begin{align*}
    \int_{\bR}\abs{\varphi(\theta)}^k\,d\theta&=\int_{-\delta}^\delta\abs{\varphi(\theta)}^k\,d\theta+\int_{\delta<\abs{\theta}<R}\abs{\varphi(\theta)}^k\,d\theta+\int_{\abs{\theta}>R}\abs{\varphi(\theta)}^k\,d\theta\\
    &\le \frac{C_1}{k^2}+2R\rho^k+2\int_R^\infty \left(\frac{C_2}{\theta}\right)^k\,d\theta= \frac{C_1}{k^2}+2R\rho^k+\frac{2R}{k-1}\cdot\left(\frac{C_2}{R}\right)^k\le\frac{C}{k^2}
\end{align*}
for some universal constant $C>0$.
\end{proof}

\section{The arrival-time difference walk}\label{sec:ATD-walk}
In this section we consider the competition between two random walks on $\bZ$.
We denote two independent random walks by $X^{(1)}_t$ and $X^{(2)}_t$, which start at 0 and $-d$ respectively, and are interested in the expected size of the interface $\cI_{[0,M]}$ on the interval $[0,M]$, for some $M\in\bN$.
Recalling notation from Section~\ref{sec:prelim}, let $\tau^{(i)}_n$ denote the first time the respective random walk visits vertex $n$, and note that by the strong Markov property and independence between the walks $X^{(i)}_t$, the random variables $\eta^{(i)}_n:=\left(\tau^{(i)}_{n}-\tau^{(i)}_{n-1}\right)$ for $i\in \{1,2\}$ and $n\in\bN$ are independent, identically distributed, and have the same distribution as $\tau_1$ under $\pr_0$.

Note that a vertex $n$ is claimed by the $X^{(1)}_t$ if and only if $\tau^{(1)}_{n}-\tau^{(2)}_{n}<0$. This difference be rewritten as 
\[
    \tau^{(1)}_{n}-\tau^{(2)}_{n}=\sum_{k=1}^{n}\left(\tau^{(1)}_{k}-\tau^{(1)}_{k-1}\right)-\sum_{k=1}^{n}\left(\tau^{(2)}_{k}-\tau^{(2)}_{k-1}\right)-\tau^{(2)}_{0}=\sum_{k=1}^{n}\left(\eta^{(1)}_k-\eta^{(2)}_{k}\right)-\tau^{(2)}_{0}.
\]
We interpret it as a shifted, by a random quantity $\tau^{(2)}_{0}$, symmetric discrete-time random walk on $\bR$ with i.i.d.\ steps that have the same distribution as $\tau_1$ and its independent copy $\tau_1'$ under $\pr_0$. To formalize the notation let $\zeta_n:=\eta_n^{(1)}-\eta_n^{(2)}$, $n\ge1$, and these are i.i.d.\ random variables with the same distribution as $(\tau_1-\tau_1')$. Then \textbf{the arrival-time difference walk (ATD walk)} $S_n$ is defined by
\begin{equation}\label{eq:ATD walk}
     S_n=\sum_{k=1}^n\zeta_k.
\end{equation}

For a random variable $U$, the we denote the shifted ATD walk by 
\begin{equation}\label{eq:shifted ATD walk}
    Y_{U,n}:= S_n-U.
\end{equation}
In particular, when $U=\tau^{(2)}_{0}$ we have $Y_{U,n}=\tau^{(1)}_{n}-\tau^{(2)}_{n}$.

Since hitting times have continuous distributions, and thus almost surely distinct, we can rewrite the size of the interface as the number of zero-crossings of the shifted ATD walk
\begin{equation}\label{eq:ATD-crossing-repr}
    \abs{\cI_{[0,M]}}=\sum_{n=1}^{M} \ind_{Y_{U,n-1}Y_{U,n}<0}, \quad\mbox{where}\quad U=\tau^{(2)}_{0}.
\end{equation}

The goal of this section is to prove the following theorem.
\begin{thm}\label{thm:ATDwalk}
    Let $d,M\in\bN$ such that $d<M$. Suppose $X^{(1)}_t$ and $X^{(2)}_t$ are two independent continuous time random walks on $\bZ$ started at $0$ and $-d$, respectively. Then there are universal constants $c,C>0$ such that
    \[
      c\ln\left(1+\frac{M}{d}\right)\le \E\abs{\cI_{[0,M]}}\le C\ln\left(1+\frac{M}{d}\right).
    \]
\end{thm}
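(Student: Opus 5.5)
We estimate the expected number of zero-crossings termwise via \eqref{eq:ATD-crossing-repr}:
\[
\E\abs{\cI_{[0,M]}}=\sum_{n=1}^{M}\pr\left(Y_{U,n-1}Y_{U,n}<0\right),\qquad U=\tau^{(2)}_0 .
\]
The plan is to show that the $n$-th term is of order $1/n$ for $n\gtrsim d^2$-type scales, after taking into account that $U$ is heavy tailed. By Lemma~\ref{lem:tau-tail}, $\pr(U>x)\le Cd/\sqrt{x}$, and conversely $\pr(U>x)\ge c\,\min(1,d/\sqrt x)$. Here $U$ is a sum of $d$ copies of $\tau_1$, so its typical size is $d^2$. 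Likewise $S_n$ is a symmetric walk with steps in the domain of attraction of a symmetric $1/2$-stable law (Lemma~\ref{lem:basic_bounds}(3)), so $S_n$ has scale $n^2$. A crossing at step $n$ requires $|S_{n-1}-U|\le|\zeta_n|$.

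\textbf{Upper bound.} Condition on $\zeta_n$ and on $U$. By Lemma~\ref{lem:char_func_decay} and Fourier inversion, $S_{n-1}$ has a density bounded by $C/n^2$ for $n\ge3$. Hence
\[
\pr\bigl(|S_{n-1}-U|\le|\zeta_n|\bigr)\le \frac{C}{n^2}\,\E\min\bigl(|\zeta_n|,\,n^2\bigr)+\pr\bigl(|\zeta_n|>n^2\bigr)\le \frac{C}{n^2}\cdot n+\frac{C}{n}\le \frac Cn ,
\]
using the truncated-moment estimate $\E\min(|\zeta|,t)\le C\sqrt t$ from Lemma~\ref{lem:basic_bounds}(3). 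This gives $C/n$ uniformly, but summing it only yields $\ln M$, not $\ln(M/d)$.

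For $n\le d$ we need smallness from $U$. Split on the event $\{U\le x\}$, with $x$ comparable to $d^2$. A crossing requires $|S_{n-1}|+|\zeta_n|\ge U$, and $S_n$ has scale $n^2\ll d^2$. Concretely, the summand is at most
\[
\pr\bigl(|S_{n}|\vee|S_{n-1}|\ge U\bigr)\le \pr(U\le \lambda d^2)+\pr\bigl(\max(|S_n|,|S_{n-1}|)\ge\lambda d^2\bigr).
\]
This is too crude, since $\pr(U\le\lambda d^2)$ does not vanish. The refinement is to use the density bound for $U$ itself instead. $U$ is a sum of $d$ i.i.d.\ copies of $\tau_1$, so by Lemma~\ref{lem:char_func_decay} its density is at most $C/d^2$. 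Conditioning on $(S_{n-1},\zeta_n)$ then gives the summand bound $\frac{C}{d^2}\E\min(|\zeta_n|,d^2)+\pr(|\zeta_n|>d^2)\le C/d$. Summing over $n\le d$ gives $O(1)$, and the terms $d<n\le M$ contribute $\sum C/n\le C\ln(M/d)+C$. Since $d<M$, $1\lesssim\ln(1+M/d)$, which completes the upper bound. The small cases ($d=1$, and $n\le2$, where Lemma~\ref{lem:char_func_decay} needs $k\ge2$) are absorbed into constants.

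\textbf{Lower bound.} This is the main obstacle. For $n\in[Kd,M]$ we want $\pr(\text{crossing at }n)\ge c/n$. We need a local lower bound: $\pr(|S_{n-1}-U|\le 1,\ \zeta_n \text{ of opposite sign with } |\zeta_n|\ge1)\ge c/n^2$. I would try a local limit theorem for $S_{n-1}$: its density at points $y$ with $|y|\le n^2$ is $\ge c/n^2$. This follows from the stable LLT, with convergence of densities justified by the integrability in Lemma~\ref{lem:char_func_decay}. Since $U\le n^2$ with probability bounded below when $n\ge Kd$ (Lemma~\ref{lem:tau-tail}), we get $\pr(S_{n-1}-U\in[0,1])\ge c/n^2$. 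On that event, $\zeta_n<-1$ with probability bounded below independently. So each term is at least $c/n^2$, which is too weak.

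The better route is a weighted count: use the bound $\pr(S_{n-1}-U\in[0,s],\ \zeta_n<-s)$ summed dyadically over $s\in[1,n^2]$. Each dyadic scale contributes $(s/n^2)\cdot c s^{-1/2}$, by Lemma~\ref{lem:basic_bounds}(2),(3). The top scale $s\asymp n^2$ gives $c/n$. That scale needs the density lower bound uniformly on a window of width $n^2$, which the LLT provides. Summing over $n\in[Kd,M]$ yields $c\ln(M/(Kd))$. When $M\le K'd$ the bound $\ln(1+M/d)\le C$ is covered by a single crossing, which has positive probability bounded below: the vertex $0$ is claimed by walker $1$, and vertex $M$ is claimed by walker $2$ with probability bounded below by scaling.
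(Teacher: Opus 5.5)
Your proposal is correct. The upper bound is essentially the paper's argument. The paper also combines Fourier anti-concentration from Lemma~\ref{lem:char_func_decay} with $\pr(\zeta>x)\lesssim x^{-1/2}$, but it bounds the density of $Y_{U,n-1}$ in one step by $C/(n-1+d)^2$, using $\abs{\E e^{i\theta Y_{U,n-1}}}=\abs{\varphi(\theta)}^{2n-2+d}$. That avoids your split at $n=d$ and does not need $d<M$ to absorb an additive constant.

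The lower bound is where you take a genuinely different route. The paper never estimates single terms. Using the one-big-jump Lemma~\ref{lem:2_interval_lemma}, it shows (Lemma~\ref{lem:oneblock-crossing}) that each dyadic block $\{m+1,\ldots,2m\}$ with $u\le m^2$ contains a crossing with probability at least $p$, and then counts $\asymp\log_2(M/d)$ such blocks. You instead prove the termwise bound $\pr(Y_{U,n-1}Y_{U,n}<0)\ge c/n$ for $Kd\le n\le M$. This matches the upper bound term by term and is the sharper statement, but it imports a stable limit theorem that the paper avoids. Your single-crossing fallback for $M\le K'd$ is the paper's Lemma~\ref{lem:uniform_lower_bd}, with vertex $M$ in place of vertex $d$.

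Your route can be simplified in two ways.
\begin{itemize}
\item The dyadic sum over $s$ is unnecessary, and as written it adds up nested events. The single scale $s=n^2$ already gives $\pr(Y_{U,n-1}Y_{U,n}<0)\ge\pr(U\le n^2)\inf_{0\le u\le n^2}\pr(u<S_{n-1}\le u+n^2)\,\pr(\zeta_n<-n^2)$.
\item No density-level LLT is needed, only weak convergence. Since $S_n\stackrel{d}{=}\tau_n-\tau_n'$, \eqref{eq:tau_transform} gives $\E e^{-\lambda\tau_n/n^2}\to e^{-\sqrt{2\lambda}}$, so $S_n/n^2\Rightarrow T-T'$ with $T,T'$ i.i.d.\ L\'evy variables. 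Their difference has an everywhere positive continuous density. P\'olya's theorem then makes the interval bound uniform in $u\in[0,n^2]$ for $n\ge n_0$, and you can absorb this by taking $K\ge n_0$.
\end{itemize}

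Besides being self-contained, the paper's block formulation is robust on the cycle. In Lemma~\ref{lem:cycle_block}, the wrap-around error $O(m/N)$ is paid once per dyadic block against a crossing probability $p$. A naive per-step version would subtract an error of order $n/N$ from a main term of order $1/n$, which fails once $n\gtrsim\sqrt N$.
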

We prove Theorem~\ref{thm:ATDwalk} by showing the upper and lower bounds separately in Propositions~\ref{prop:ATD_upper_bdd} and~\ref{prop:ATD_lower_bdd}, respectively, to which the next two subsections are dedicated. For the purposes of proving Theorem~\ref{thm:main}, one might be tempted to think that we would be able to use the bounds from Theorem~\ref{thm:ATDwalk} directly on the larger arc between the initial positions of the two walkers on the cycle. That is indeed the case with the upper bound. For the lower bound, however, we were not able to use this bound as is and instead will use a similar argument and some of preliminary lemmas developed in Subsection~\ref{sec:ATD_lower_bdd}.

\subsection{Upper bound in Theorem~\ref{thm:ATDwalk}}\label{sec:ATD_upper_bdd}
To derive the upper bound in Theorem~\ref{thm:ATDwalk} we prove a slightly stronger statement than needed, removing the condition $d<M$. 
\begin{prop}\label{prop:ATD_upper_bdd}
    In the situation of Theorem~\ref{thm:ATDwalk}, without assuming $d<M$, there is a universal constant $C>0$ such that 
    \[
    \E\abs{\cI_{[0,M]}}\le C\ln\left(1+\frac{M}{d}\right).
    \]
\end{prop}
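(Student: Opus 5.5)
By linearity and \eqref{eq:ATD-crossing-repr}, $\E\abs{\cI_{[0,M]}}=\sum_{n=1}^M\pr(Y_{U,n-1}Y_{U,n}<0)$ with $U=\tau^{(2)}_0$. I will bound each term by roughly $C\min\{1,\ (n/d^2)^{1/2}\}\cdot n^{-1}$, up to the right scales, and then sum. A crossing at step $n$ requires $S_{n-1}-U$ and $S_n-U$ to have opposite signs. Equivalently, $U$ lies between $S_{n-1}$ and $S_{n-1}+\zeta_n$, so $\abs{S_{n-1}-U}<\abs{\zeta_n}$. I therefore condition on $\zeta_n$ and on $U$, both independent of $S_{n-1}$, and use a local (anti-concentration) bound for $S_{n-1}$.

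\textbf{Local bound.} For $k\ge2$, $S_k$ has characteristic function $\abs{\varphi}^{2k}$. By Fourier inversion and Lemma~\ref{lem:char_func_decay}, its density is bounded by $\frac1{2\pi}\int\abs{\varphi}^{2k}\le C/k^2$. Hence $\pr(\abs{S_{k}-u}<r\mid \cdot)\le Cr/k^2$ uniformly in $u$. This gives
\[
\pr(\text{crossing at }n)\le \E\min\Bigl\{1,\ \frac{C\abs{\zeta_n}}{(n-1)^2}\Bigr\}.
\]
Since $\pr(\abs{\zeta}>s)\le Cs^{-1/2}$ by Lemma~\ref{lem:basic_bounds}(3), we get $\E\min\{1,\abs\zeta/a\}\le Ca^{-1/2}$ with $a=(n-1)^2/C$. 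So the crossing probability is at most $C/n$. Summing over $n\le M$ gives $C\ln(1+M)$, which suffices when $d$ is bounded. The small cases $n=1,2$ contribute $O(1)$, which is harmless when $M\ge d$. When $M<d$, however, the target is about $M/d$, so every term must be small, and this is where the dependence on $d$ enters.

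\textbf{Using $U$.} The shift $U=\tau^{(2)}_0$ is a hitting time of distance $d$, so it is typically of order $d^2$ and heavy-tailed. The walk $S_{n-1}$ has scale $n^2$ (steps in the domain of attraction of a symmetric $1/2$-stable law). For $n\ll d$ a crossing therefore needs $\abs\zeta$ comparable to $U$, or $S_{n-1}$ comparable to $U$. I will not condition on $U$ alone. Instead I use the symmetric form: $U$ must lie in an interval of length $\abs{\zeta_n}$ around $S_{n-1}$. So I need a density bound for $U$ on large values. By Lemma~\ref{lem:tau-tail} and the structure $U=\sum_{j=1}^d\eta_j$, I expect $U$ to have density at most $C\min\{d^{-2},\,d\,u^{-3/2}\}$. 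The $d^{-2}$ part comes from Lemma~\ref{lem:char_func_decay} with $k=d$. The $u^{-3/2}$ tail comes from a one-big-jump argument, or by Lemma~\ref{lem:char_func_decay} applied to $d-1$ summands after isolating the largest.

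Combining the two estimates, I take the better of the two density bounds, either $S_{n-1}$'s $C/n^2$ or $U$'s $C/d^2$:
\[
\pr(\text{crossing at }n)\le \E\min\{1,\ C\abs{\zeta_n}/\max(n,d)^2\}\le C/\max(n,d).
\]
Summing over $n\le M$ gives $C\min(M,d)/d+C\ln^+(M/d)\le C\ln(1+M/d)$, using $\min(M,d)/d\le C\ln(1+M/d)$ when $M\le d$ and $\le 1\le C\ln 3$ otherwise. The main obstacle is justifying the density bound $\sup_u f_U(u)\le C/d^2$ rigorously. This should follow from Fourier inversion, since $U\stackrel{d}{=}\tau_d$ has characteristic function $\varphi^d$ and $\int\abs{\varphi}^d\le C/d^2$ by Lemma~\ref{lem:char_func_decay} for $d\ge2$; the case $d=1$ is covered by the first bound. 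The terms with small $n$, where Lemma~\ref{lem:char_func_decay} does not apply to $S_{n-1}$, are then handled entirely by the density of $U$. So the tail refinement above is not actually needed, and the sup bound alone suffices.
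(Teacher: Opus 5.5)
Your proof is correct and is essentially the paper's argument. Both combine a Fourier-inversion density bound from Lemma~\ref{lem:char_func_decay} with the $t^{-1/2}$ tail of $\zeta_n$ from Lemma~\ref{lem:basic_bounds}(3), giving a per-step crossing probability of order $1/(n+d)$, and then sum a harmonic series.

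The only difference is cosmetic. You bound the densities of $S_{n-1}$ and $U$ separately and use the better one, whereas the paper bounds the density of $Y_{U,n-1}=S_{n-1}-U$ directly through $\abs{\varphi}^{2(n-1)+d}$. That gives $C/(n-1+d)^2$ in one step and avoids your separate handling of $n=1$ and $d=1$. One small slip: in your final summation, for $M>d$ you only have $\ln(1+M/d)>\ln 2$, not $\ln 3$, which changes nothing.
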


First, we establish the key anti-concentration estimate.

\begin{lem}\label{lem:ATD_density}
For $n\ge0$ and $d\ge1$, let $Y_{U,n}$ be the shifted ATD walk as in \eqref{eq:shifted ATD walk} with $U=\tau^{(2)}_{0}$. Then the probability density function $f_{n}(x)$ of $Y_{U,n}$ for all $x\in \bR$ satisfies
\[
f_{n}(x)\le\frac{C}{(n+d)^2},
\]
for some universal constant $C>0$.
\end{lem}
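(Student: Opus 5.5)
Write $Y_{U,n}=S_n-U$ with $U=\tau^{(2)}_0$. Here $\tau^{(2)}_0$ is the hitting time of $0$ by the walk started at $-d$, which has the law of $\tau_d$, and $U$ is independent of $S_n$. By the strong Markov property (as in the proof of Lemma~\ref{lem:tau-tail}) we have $U\stackrel{\mathrm d}{=}\eta'_1+\dots+\eta'_d$ for i.i.d.\ copies of $\tau_1$. Hence $Y_{U,n}$ is a sum of $n+d$ independent terms: $n$ copies of $\eta$, $n$ copies of $-\eta'$, and $d$ copies of $-\eta''$. Its characteristic function is therefore
\[
\psi_n(\theta)=\varphi(\theta)^n\,\overline{\varphi(\theta)}^{\,n}\,\overline{\varphi(\theta)}^{\,d},\qquad |\psi_n(\theta)|=|\varphi(\theta)|^{2n+d}.
\]

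The plan is to use Fourier inversion. Whenever $|\psi_n|\in L^1(\bR)$, the variable $Y_{U,n}$ has a continuous density satisfying
\[
f_n(x)=\frac1{2\pi}\int_{\bR}e^{-i\theta x}\psi_n(\theta)\,d\theta,\qquad\text{so}\qquad \sup_x f_n(x)\le\frac1{2\pi}\int_{\bR}|\varphi(\theta)|^{2n+d}\,d\theta .
\]
Since $d\ge1$, the exponent is $k:=2n+d\ge1$. When $k\ge2$, Lemma~\ref{lem:char_func_decay} gives the bound $C/k^2$. Because $2n+d\ge n+d$, this is at most $C/(n+d)^2$, which is the claim.

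The only remaining case is $k=1$, i.e.\ $n=0$ and $d=1$, where $Y_{U,0}=-\tau_1$. Here $|\varphi|\sim C/|\theta|$ is not integrable, so inversion does not apply directly. Instead I use the explicit density $f_{\tau_1}(t)=t^{-1}e^{-t}I_1(t)$ from the proof of Lemma~\ref{lem:basic_bounds}. It is continuous on $(0,\infty)$, tends to $1/2$ as $t\to0$ since $I_1(t)\sim t/2$, and decays like $t^{-3/2}$ at infinity. It is therefore bounded, which gives $f_0\le C=C/(0+1)^2$.

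I do not expect a serious obstacle, since the estimate of Lemma~\ref{lem:char_func_decay} is designed for exactly this purpose. The one point needing care is justifying Fourier inversion, which requires integrability of $|\varphi|^{k}$ for $k\ge2$. That integrability is already part of the content of Lemma~\ref{lem:char_func_decay}, because the integral is finite. The conclusion is the existence of a bounded continuous density, with the sup bound as stated.
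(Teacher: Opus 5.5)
Your proof is correct and follows the paper's argument essentially step for step. You compute $\abs{\E e^{i\theta Y_{U,n}}}=\abs{\varphi(\theta)}^{2n+d}$, apply Fourier inversion with Lemma~\ref{lem:char_func_decay} when $2n+d\ge2$, and treat the remaining case $n=0$, $d=1$ via the bounded density of $\tau_1$; your extra remarks on integrability for inversion and on the limit $1/2$ of $f_{\tau_1}$ at $0$ only spell out what the paper leaves implicit.
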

\begin{proof} We note that $f_{n}(x)$ depends on $d$, however for clarity we omitted it from the notation.
Recalling the notation $\varphi(\theta)$, since $U=\tau_0^{(2)}$ and is independent of $( S_n)_{n\ge0}$, we have that the characteristic function of $Y_{U,n}$ is given by
\[
\E e^{i\theta Y_{U,n}}=\varphi(\theta)^n\varphi(-\theta)^{n+d}.
\]
Now since $\varphi(-\theta)=\overline{\varphi(\theta)}$,
\[
\abs{\E e^{i\theta Y_{U,n}}}=\abs{\varphi(\theta)}^{2n+d}.
\]

Suppose that $2n+d\ge2$. Then by Fourier inversion theorem and Lemma~\ref{lem:char_func_decay}, for every $x\in\bR$
\begin{align*}
f_{n}(x)&\le\frac{1}{2\pi}\int_{\bR}\abs{\varphi(\theta)}^{2n+d}\,d\theta\le\frac{C}{(2n+d)^2}\le\frac{C}{(n+d)^2}.
\end{align*}
The only remaining case is $n=0$ and $d=1$.  In this case $X_0^{(2)}=-1$ and $Y_{U,0}=-U=-\tau_0^{(2)}$ has the same density, up to a reflection, as $\tau_1$, which is bounded.
\end{proof}

\begin{proof}[Proof of Proposition~\ref{prop:ATD_upper_bdd}]
    Since the distribution of $\zeta_n$ is symmetric, for every $x\ne0$,
    \[
        \pr\left(Y_{U,n-1}Y_{U,n}<0\mid Y_{U,n-1}=x\right)=\pr(\zeta_n>\abs{x}).
    \]
By Lemma~\ref{lem:basic_bounds}.(3)
\[
\pr(Y_{U,n-1}Y_{U,n}<0)\le C\E\left(\frac{1}{\sqrt{1+\abs{Y_{U,n-1}}}}\right).
\]

Then from Lemma~\ref{lem:ATD_density}, since $a:=n-1+d\ge1$, it follows that
\begin{align*}
\E\left(\frac{1}{\sqrt{1+\abs{Y_{U,n-1}}}}\right)&=\int_{\bR}\frac{f_{n-1}(x)}{\sqrt{1+\abs{x}}}\,dx\le\frac{C}{a^2}\int_{-a^2}^{a^2}\frac{dx}{\sqrt{1+\abs{x}}}+\frac{1}{\sqrt{1+a^2}}\le\frac{C}{a}=\frac{C}{n-1+d}.
\end{align*}

Thus, from \eqref{eq:ATD-crossing-repr} we have
\begin{align*}
\E\abs{\cI_{[0,M]}}&=\sum_{n=1}^{M}\pr(Y_{U,n-1}Y_{U,n}<0)\le C\sum_{n=1}^{M}\frac{1}{n-1+d}\le
2C\sum_{k=d}^{d+M-1}\ln\left(1+\frac1k\right)=2C\ln\left(1+\frac{M}{d}\right),
\end{align*}
where in the second inequality we used that $1/k\le2\ln\left(1+1/k\right)$ for every $k\ge1$, allowing us to convert the sum to the telescoping sum of logarithms.
\end{proof}

\subsection{Lower bound in Theorem~\ref{thm:ATDwalk}}\label{sec:ATD_lower_bdd}
The main aim of this subsection is to prove the following proposition, that repeats the lower bound from Theorem~\ref{thm:ATDwalk}.
\begin{prop}\label{prop:ATD_lower_bdd}
    In the situation of Theorem~\ref{thm:ATDwalk}, there is a universal constant $c>0$ such that 
    \[
    \E\abs{\cI_{[0,M]}}\ge  c\ln\left(1+\frac{M}{d}\right).
    \]
\end{prop}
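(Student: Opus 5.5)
We need a lower bound $\pr(Y_{U,n-1}Y_{U,n}<0)\ge c/(n+d)$ for $n$ in a range of comparable size to $[d,M]$. Summing over $n\le M$ then gives $c\sum_{n}\frac{1}{n+d}\ge c\ln(1+M/d)$. The starting point is the identity from the proof of Proposition~\ref{prop:ATD_upper_bdd}:
\[
\pr(Y_{U,n-1}Y_{U,n}<0)=\E\,\pr(\zeta>\abs{x})\big|_{x=Y_{U,n-1}}\ge c\,\E\Bigl(\frac{1}{\sqrt{1+\abs{Y_{U,n-1}}}}\Bigr),
\]
which uses the lower bound of Lemma~\ref{lem:basic_bounds}.(3) for $\abs{x}\ge1$ and monotonicity for $\abs{x}<1$. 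It therefore suffices to show that $\abs{Y_{U,n-1}}\le K(n+d)^2$ with probability bounded below by a universal constant. On that event the integrand is at least $c/(n+d)$.

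\textbf{Tightness of $Y_{U,n-1}$ at scale $(n+d)^2$.} Write $Y_{U,n-1}=S_{n-1}-U$, where $U=\tau^{(2)}_0$ is a sum of $d$ i.i.d.\ copies of $\tau_1$.
\begin{itemize}
\item By Lemma~\ref{lem:tau-tail}, $\pr(U>x)\le Cd/\sqrt{x}$. Taking $x=K(n+d)^2$ gives at most $C/\sqrt K$.
\item For $S_{n-1}$, truncate each $\zeta_k$ at level $t=(n+d)^2$. The probability that some $\abs{\zeta_k}>t$ is at most $n\cdot Ct^{-1/2}\le C n/(n+d)$ by Lemma~\ref{lem:basic_bounds}.(3). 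This is not small uniformly, so I would instead truncate at $t=K'(n+d)^2$, which makes the probability at most $C/\sqrt{K'}$.
\item The truncated sum has mean zero by symmetry. By Lemma~\ref{lem:basic_bounds}.(4) its variance is at most $nCt^{3/2}$. Chebyshev then bounds the probability that it exceeds $Kt$ by $nCt^{3/2}/(Kt)^2=Cn/(K^2\sqrt t)\le C/(K^2\sqrt{K'})$.
\end{itemize}
Choosing $K,K'$ large, $\pr(\abs{Y_{U,n-1}}\le K''(n+d)^2)\ge1/2$. Hence $\pr(Y_{U,n-1}Y_{U,n}<0)\ge c/(n+d)$ for every $n\ge1$.

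\textbf{Summation.} Summing, $\E\abs{\cI_{[0,M]}}\ge c\sum_{n=1}^M\frac1{n+d}\ge c\ln\frac{M+d+1}{d+1}$. Since $d<M$, we have $d+1\le 2d$ and $M+d+1\ge M+d$. So this is at least $c\ln\frac{M+d}{2d}$, which is comparable to $\ln(1+M/d)$ when $M/d$ is large. When $M/d$ is bounded (between $1$ and a constant), $\ln(1+M/d)$ is of order $1$, and the sum is still at least $cM/(M+d)\ge c/2$. Thus $\E\abs{\cI_{[0,M]}}\ge c\ln(1+M/d)$ in all cases.

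\textbf{Main obstacle.} The step I expect to be delicate is the crossing probability for small $\abs{x}$, i.e.\ $\pr(\zeta>\abs x)$ for $\abs x<1$. Lemma~\ref{lem:basic_bounds}.(3) only covers $t\ge1$, but monotonicity gives $\pr(\zeta>\abs x)\ge\pr(\zeta>1)\ge c$, which is enough. Otherwise the argument is just heavy-tailed truncation. The truncation level must be chosen proportional to $(n+d)^2$ so that both the max-jump bound and the Chebyshev bound are uniform in $n$ and $d$.
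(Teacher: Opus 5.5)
Your proof is correct, and it takes a genuinely different route from the paper. The paper never bounds individual crossing probabilities from below. Instead it fixes a deterministic shift $u$ and splits $\{1,\dots,M\}$ into dyadic blocks starting at $m_0=\lceil\sqrt{1+u}\rceil$. Lemma~\ref{lem:oneblock-crossing}, built from Lemmas~\ref{lem:jump sizes} and~\ref{lem:2_interval_lemma}, then gives an order-one crossing probability per block: the walk sits at $-\Theta(m^2)$ at time $m$, and a single big jump puts it at $+\Theta(m^2)$ by time $2m$. The paper then averages over $U$ on $\{U\le kd^2\}$, arriving at $c_2\ln(1+M/d)-C_2$, and removes the additive loss with Lemma~\ref{lem:uniform_lower_bd}.

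You instead prove the per-edge estimate $\pr(Y_{U,n-1}Y_{U,n}<0)\ge c/(n+d)$ directly. You combine the same one-step crossing formula used in the upper bound with tightness of $Y_{U,n-1}$ at scale $(n+d)^2$, obtained by truncation plus Chebyshev (the estimates of Lemma~\ref{lem:jump sizes}) and Lemma~\ref{lem:tau-tail} for $U$. What your route buys:
\begin{itemize}
\item It mirrors the upper-bound proof exactly, with tightness replacing the density bound of Lemma~\ref{lem:ATD_density}.
\item It gives the sharper two-sided statement $\pr\bigl((n-1,n)\in\cI_{[0,M]}\bigr)\asymp 1/(n+d)$ for every $n$.
\item It needs neither conditioning on $U$ nor Lemmas~\ref{lem:uniform_lower_bd}, \ref{lem:oneblock-crossing} and~\ref{lem:2_interval_lemma}.
\item It does not need $d<M$. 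Your final case split can be replaced by $\ln\bigl(1+\frac{M}{d+1}\bigr)\ge\ln\bigl(1+\frac{M}{2d}\bigr)\ge\frac12\ln\bigl(1+\frac Md\bigr)$.
\end{itemize}

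What the paper's block formulation buys is reuse in Section~\ref{sec:main-proof}. There Lemma~\ref{lem:cycle_block} subtracts a wrap-around probability of order $(4m+d)/N$. That loss is harmless against an order-one block probability, but it would swamp a per-edge probability of order $1/n$ once $n\gg\sqrt N$. So your argument settles this proposition more cleanly, but it would not carry over to the cycle without an additional argument.
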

To derive this lower bound we will need two lemmas. The first one gives a uniform lower bound for the expected size of the interface.
\begin{lem}\label{lem:uniform_lower_bd}
In the situation of Theorem~\ref{thm:ATDwalk}, there is a universal constant $c>0$ 
\begin{equation}
        \E\abs{\cI_{[0,M]}}\ge c.
    \end{equation}
\end{lem}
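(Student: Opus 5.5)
Since $M>d\ge1$ and $\abs{\cI_{[0,M]}}\ge\abs{\cI_{[0,1]}}$, it suffices to bound from below the probability of a single crossing. The natural choice is the crossing at the first step, $\pr(Y_{U,0}Y_{U,1}<0)$ with $U=\tau^{(2)}_0$, but that only works when $d$ is bounded. For general $d$ the last-step crossing, or a crossing somewhere in a window of steps, is more useful. The uniform approach we will use is the following: by \eqref{eq:ATD-crossing-repr}, $\abs{\cI_{[0,M]}}\ge\ind\{Y_{U,0}<0,\ Y_{U,M}>0\}$. Indeed, $Y_{U,0}=-U<0$ almost surely, so if $Y_{U,M}>0$ then at least one sign change occurs among the $M$ steps. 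Hence
\[
\E\abs{\cI_{[0,M]}}\ge\pr(Y_{U,M}>0)=\pr\bigl(S_M>\tau^{(2)}_0\bigr),
\]
and it remains to show this probability is bounded below by a universal constant whenever $M>d$.

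To do so, write $S_M-U=\sum_{k=1}^M\eta^{(1)}_k-\sum_{k=1}^{M+d}\eta'_k$, where the $\eta'_k$ are i.i.d.\ copies of $\tau_1$ (those building $\tau^{(2)}_M$). The heavy tail $\pr(\eta>t)\asymp t^{-1/2}$ from Lemma~\ref{lem:basic_bounds}.(1) is the mechanism to exploit: a single large step of walker $1$ dominates. Concretely, consider the event
\[
A=\bigl\{\eta^{(1)}_1>2\lambda (M+d)^2\bigr\}\cap\bigl\{\tau^{(2)}_M\le\lambda(M+d)^2\bigr\}.
\]
On $A$ we have $S_M-U\ge\eta^{(1)}_1-\tau^{(2)}_M>0$. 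By independence, $\pr(A)=\pr(\eta>2\lambda(M+d)^2)\,\pr(\tau^{(2)}_M\le\lambda(M+d)^2)$. Lemma~\ref{lem:tau-tail}, applied with $n=M+d$ and $x=\lambda(M+d)^2$, gives $\pr(\tau^{(2)}_M>\lambda(M+d)^2)\le C/\sqrt\lambda\le1/2$ once $\lambda$ is a large enough constant. Lemma~\ref{lem:basic_bounds}.(1) gives $\pr(\eta>2\lambda(M+d)^2)\ge c/(\sqrt{2\lambda}(M+d))$. Together these yield only $\pr(A)\ge c'/(M+d)$, which decays, so a single fixed step is insufficient.

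The fix is to allow any of the $M$ steps of walker $1$ to be large. Let $B=\{\max_{k\le M}\eta^{(1)}_k>2\lambda(M+d)^2\}$. Then $\pr(B)=1-(1-p)^M$ with $p\ge c/(\sqrt{2\lambda}(M+d))$. Since $M>d$ gives $M+d<2M$, we get $Mp\ge c/(2\sqrt{2\lambda})$, and therefore $\pr(B)\ge 1-e^{-c/(2\sqrt{2\lambda})}=:c_0>0$. On $B\cap\{\tau^{(2)}_M\le\lambda(M+d)^2\}$ we have $S_M-U\ge\max_k\eta^{(1)}_k-\tau^{(2)}_M>0$ (all $\eta^{(1)}_k\ge0$), and independence of $B$ from the second walk gives $\pr(Y_{U,M}>0)\ge c_0/2$.

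The main obstacle is precisely this step: a single-step estimate decays like $1/(M+d)$. One has to use the assumption $M>d$ to make $M$ independent chances compensate, and the scaling $t^{-1/2}$ at scale $(M+d)^2$ is exactly what makes this balance. The only care needed is that Lemma~\ref{lem:tau-tail} requires $x\ge1$, which holds here, and that the constants are universal.
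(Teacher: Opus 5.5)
Your proof is correct and uses the same mechanism as the paper. One walker's hitting time is at most $Kn^2$ with probability at least $1/2$ by Lemma~\ref{lem:tau-tail}. The other walker's hitting time exceeds $Kn^2$ with probability bounded below, because one of its $n$ heavy-tailed increments is that large. The only difference is the target vertex: the paper uses vertex $d$, comparing $\tau^{(2)}_d\stackrel{\mathrm d}{=}\tau_{2d}$ with $\tau^{(1)}_d$ and needing $d\le M$ only to place $d$ in $[0,M]$, whereas you use vertex $M$ and need $M>d$ to get $M+d<2M$.
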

\begin{proof}
    By Lemma~\ref{lem:tau-tail} and the fact that $\tau^{(2)}_d\stackrel{\mathrm d}{=}\tau_{2d}$ we have that for every sufficiently large universal constant $K$
    $$
    \pr\left(\tau^{(2)}_d>Kd^2\right)\le\frac{C}{\sqrt K}\le\frac12.
    $$
    On the other hand, using the same trick as before and writing $\tau^{(1)}_d\stackrel{\mathrm d}{=}\eta_1+\cdots+\eta_d,$ where $\eta_1,\ldots,\eta_d$ are independent copies of $\tau_1$ under $\pr_0$, Lemma~\ref{lem:basic_bounds}.(1) gives 
    \begin{align*}
    \pr\left(\tau^{(1)}_d>Kd^2\right)&\ge\pr\left(\max_{1\le k\le d}\eta_k>Kd^2\right)=1-\left(1-\pr(\eta_1>Kd^2)\right)^d\ge1-\exp\left(-\frac{C}{\sqrt K}\right)>0
    \end{align*}
    for some $C>0$. By independence of $\tau^{(1)}_d$ and $\tau^{(2)}_d$, we have
    $$
    \pr\left(\tau^{(2)}_d\le Kd^2<\tau^{(1)}_d\right)
    \ge \frac{1}{2}\left(1-\exp\left(-\frac{C}{\sqrt K}\right)\right):=c>0.
    $$
    On this event, the vertex $d\in[0,M]$ is claimed by $X^{(2)}_t$ and thus there must be at least one interface edge in the interval. 
\end{proof}

The second lemma provides a uniform lower bound for the probability of a zero-crossing of the ATD walk in an interval of a given length; its proof is placed at the end of the section.

\begin{lem}\label{lem:oneblock-crossing}
Let $Y_{u,n}$ be the shifted ATD walk as in \eqref{eq:shifted ATD walk} with deterministic shift $u\in\bR$. Then there is a universal constant $p>0$ such that, for every $m\ge1$ and $0\le u\le m^2$,
\[
\pr\left(\exists n\in\{m+1,\ldots,2m\}:Y_{u,n-1}Y_{u,n}\le0\right)\ge p.
\]
\end{lem}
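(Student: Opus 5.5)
The plan is to force a sign change of $Y_{u,\cdot}$ between times $m$ and $2m$. Since $Y_{u,n}=S_n-u$, it suffices to find a uniformly positive probability event on which $Y_{u,m}\le 0\le Y_{u,2m}$. On that event some $n\in\{m+1,\ldots,2m\}$ has $Y_{u,n-1}\le 0$ and $Y_{u,n}\ge0$, hence $Y_{u,n-1}Y_{u,n}\le0$. The natural scale is $m^2$, because the steps $\zeta_k$ have tails of order $t^{-1/2}$ by Lemma~\ref{lem:basic_bounds}.(3). We write $S_{2m}=S_m+S'_m$ with $S'_m:=\sum_{k=m+1}^{2m}\zeta_k$ independent of $S_m$ and equal in law to $S_m$.

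\emph{Step 1 (first half).} Since $u\ge0$ and $S_m$ is symmetric, $\pr(S_m\le u)\ge\pr(S_m\le0)\ge\frac12$. Next, $S_m$ is a difference of two independent sums of $m$ copies of $\tau_1$, each distributed as $\tau_m$. Lemma~\ref{lem:tau-tail} then gives $\pr(|S_m|>Km^2)\le 2\pr(\tau_m>Km^2/2)\le C/\sqrt K$. Fixing $K$ large, we get $\pr(-Km^2\le S_m\le u)\ge\frac14$. On this event $Y_{u,m}\le0$, and since $u\le m^2$, the condition $S'_m\ge (K+1)m^2$ implies $Y_{u,2m}=S_m+S'_m-u\ge -Km^2+(K+1)m^2-m^2= 0$.

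\emph{Step 2 (big positive increment).} The main obstacle is a bound $\pr(S'_m\ge Lm^2)\ge q(L)>0$ uniform in $m\ge1$, with $L=K+1$. I would exploit a single big jump. Set $T:=2Lm^2$ and let $E$ be the event that exactly one index $k\in\{m+1,\ldots,2m\}$ has $|\zeta_k|>T$, and that this $\zeta_k$ is positive.

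By Lemma~\ref{lem:basic_bounds}.(3), $\pr(|\zeta_k|>T)\asymp 1/(\sqrt L\, m)$. The number of such indices is therefore binomial with mean of order $1/\sqrt L$ (for $m\ge m_0(L)$; small $m$ are handled directly, since $\pr(\zeta_1>T)$ is bounded below there). Hence $\pr(\text{exactly one})\ge c(L)>0$, and by symmetry of $\zeta_k$ the sign is positive with conditional probability $\frac12$.

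Conditionally on $E$ and on the identity of $k$, the remaining $\zeta_j$ are i.i.d.\ with law $\zeta$ conditioned on $|\zeta|\le T$. This law is still symmetric, so their sum is $\ge0$ with conditional probability at least $\frac12$. On this event $S'_m\ge \zeta_k> T\ge Lm^2$. Thus $q(L)\ge c(L)/4$.

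\emph{Step 3 (combine).} By independence of $S_m$ and $S'_m$,
\[
\pr\left(\exists n\in\{m+1,\ldots,2m\}:Y_{u,n-1}Y_{u,n}\le0\right)\ge\pr\left(-Km^2\le S_m\le u\right)\pr\left(S'_m\ge (K+1)m^2\right)\ge \frac{q(K+1)}{4}=:p.
\]
This $p$ is universal, since $K$ was fixed by Lemma~\ref{lem:tau-tail} independently of $m$ and $u$.
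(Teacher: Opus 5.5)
Your proof is correct, and its skeleton matches the paper's. Both force $Y_{u,m}\le 0\le Y_{u,2m}$ by pairing an event for $S_m$ with an independent event that $S_{2m}-S_m$ is of order $+m^2$, the latter produced by a single jump of size order $m^2$.

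The execution differs in two places:
\begin{enumerate}
\item For the first half, the paper (Lemma~\ref{lem:2_interval_lemma}) puts $S_m$ in a window $[-x_2m^2,-x_1m^2]$ strictly below zero. You only need $-Km^2\le S_m\le u$, which follows from symmetry (using $u\ge0$) and the crude tail bound of Lemma~\ref{lem:tau-tail}.
\item For the big increment, the paper proves a two-sided window bound by inclusion--exclusion over $B_j\cap E_j$. It controls the \emph{size} of the remainder $S_m^{(j)}$ by truncation and Chebyshev with the truncated second moment of Lemma~\ref{lem:basic_bounds}.(4); this is Lemma~\ref{lem:jump sizes}. You instead condition on exactly one index having $|\zeta_k|>T$. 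The other steps are then i.i.d.\ with a symmetric truncated law, so only the \emph{sign} of the remainder must be controlled, at the cost of a factor $\frac12$.
\end{enumerate}

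Your route is shorter and uses only Lemma~\ref{lem:basic_bounds}.(3) and Lemma~\ref{lem:tau-tail}. It makes Lemma~\ref{lem:basic_bounds}.(4), Lemma~\ref{lem:jump sizes} and the separated windows $y_1>x_2+1$ unnecessary. The paper's route yields genuine two-sided localization of $S_m$ at scale $m^2$ and strict signs $Y_{u,m}<0<Y_{u,2m}$, which is more than this lemma requires. Your weak inequalities suffice for the statement as written, and in any case $S_n$ has a density for $n\ge1$.
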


We now give a proof of Proposition~\ref{prop:ATD_lower_bdd}, which is together with Proposition~\ref{prop:ATD_upper_bdd} complete the proof of Theorem~\ref{thm:ATDwalk}.

\begin{proof}[Proof of Proposition~\ref{prop:ATD_lower_bdd}]
    By \eqref{eq:ATD-crossing-repr}, it suffices to lower bound the expected number of zero-crossing of the ATD walk $Y_{U,n}$ with random shift $U=\tau_0^{(2)}$. 
    For convenience, we first consider deterministic shift $u\ge0$ and show that there are constants $c_1,C_1>0$ such that
    \begin{equation}\label{eq:determ_ATD_lower_bdd}
        \E\left(\sum_{n=1}^{M}\ind_{\{Y_{u,n-1}Y_{u,n}\le0\}}\right)\ge c_1\ln\left(1+\frac{M}{\sqrt{1+u}}\right)-C_1.
    \end{equation}

    For ease of notation, denote the sum inside of the expectation by $W=W(M,u)$. Set $m_0 = \lceil \sqrt{1+u}\rceil$, for $j\in\bZ_+$ define $m_j = 2^jm_0$, and let $\cJ:=\left\{j\in\bZ_+: m_j\le M\right\}.$  Define an event
    \[
    A_j = \{\exists n\in\{m_{j-1}+1,\ldots,m_j\}:Y_{u,n-1}Y_{u,n}\le0\}.
    \]
    By Lemma~\ref{lem:oneblock-crossing}, there is a universal $p > 0$ such that $\pr(A_j) \ge p$ uniformly for all $j$. 
    
    We then consider two cases, depending on whether $M\ge 2m_0$ (and thus interval $[0,M]$ contains at least one block from an event $A_j$) or not. 
    If $M\ge2m_0$, then $\abs{\cJ}= \lfloor \log_2(M/m_0) \rfloor\ge1$ represents the number of such blocks inside $[0,M]$. Since each event $A_j$ ensures at least 1 crossing, we have that 
    \[
    \E(W)\ge \sum_{j=1}^{\abs{\cJ}}\pr(A_j)\ge p\abs{\cJ}.
    \]
    Since $m_0 = \lceil \sqrt{1+u} \rceil$, this yields 
    \begin{equation}\label{eq:EW_part1}
    \E(W) > p\left(\log_2\left(\frac{M}{m_0}\right)-1\right)\ge \frac{p}{\ln(2)} \ln\left( 1 + \frac{M}{\sqrt{1+u}} \right) - 3p,
    \end{equation}
    where we use the fact that $\ln(x) \ge\ln(1+x)-\ln(2)$ for every $x\ge 1$.

    In the case when $M<2m_0$, the desired inequality holds for trivial reasons; indeed $\E(W) \ge 0$ while right-hand side of \eqref{eq:determ_ATD_lower_bdd} is at most $-p<0$ as $M\le2m_0-1$ yields that
    \begin{equation*}
    \frac{p}{\ln(2)} \ln\left( 1 + \frac{2m_0-1}{\sqrt{1+u}} \right) - 3p<\frac{p}{\ln(2)} \ln\left( 3 + \frac{1}{\sqrt{1+u}} \right) - 3p\le\frac{p}{\ln(2)} \ln(4) - 3p=-p\le0.
    \end{equation*}
    This, together with \eqref{eq:EW_part1} gives \eqref{eq:determ_ATD_lower_bdd}.

    We now return to the random shift $U=\tau^{(2)}_0.$ By Lemma~\ref{lem:tau-tail} for some universal constant $C>0$ and every $x\ge1$ the following inequality holds
    $$
    \pr(U>x)\le\frac{Cd}{\sqrt{x}}.
    $$  
    Choosing a constant $k$ large enough and setting $x=kd^2$ yields $\pr(U\le kd^2)\ge 1/2.$
    By independence of $U$ and $(S_n)_{n\ge0}$, conditioning on the former and using \eqref{eq:determ_ATD_lower_bdd} we derive
    \begin{align*}
    \E\abs{\cI_{[0,M]}}&=\E\left(\E\left(\left.\sum_{n=1}^{M}\ind_{\{Y_{U,n-1}Y_{U,n}\le0\}}\right|U\right)\right)\\
    &\ge\pr(U\le kd^2)\left(c_1\ln\left(1+\frac{M}{\sqrt{1+kd^2}}\right)-C_1\right)\\
    &\ge c_2\ln\left(1+\frac{M}{d}\right)-C_2.
    \end{align*}
    where in the first inequality we used that the conditional expectation is nonnegative and that the right-hand side of \eqref{eq:determ_ATD_lower_bdd} is decreasing in $u$; in last inequality we used that $d\ge1$ (to get $\sqrt{1+kd^2}\le d\sqrt{1+k}$) and $M/d>1$ to simplify the expression.
    
    Finally to remove the negative constant in the inequality above we consider two cases.
    If $\ln\left(1+M/d\right)\ge {2C_2}/{c_2}$, then 
    $$
    \E\abs{\cI_{[0,M]}}\ge\frac{c_2}{2}\ln\left(1+\frac Md\right).
    $$

    Otherwise, using Lemma~\ref{lem:uniform_lower_bd} we get
    $$
    \E\abs{\cI_{[0,M]}}\ge\frac{c\cdot c_2}{2C_2}\ln\left(1+\frac Md\right).
    $$
    Choosing the smallest of the two constants completes the proof.
\end{proof}

It remains to prove Lemma~\ref{lem:oneblock-crossing}. To do that we first provide a uniform bound on the sizes of the jumps of the ATD walk and their cancellations. We introduced a notation for the walk without $j$-th step, for some $1\le j\le n$, define
\[
   S_n^{(j)}:=\sum_{\substack{1\le i\le n\\ i\ne j}}\zeta_i.
\]

\begin{lem}\label{lem:jump sizes}
Fix $k,m\ge1$. Let $S_n$ be the ATD walk as in~\eqref{eq:ATD walk}. Then there are universal constants $C_1$ and $C_2$ such that 
\[
  \pr\left(\exists\,1\le j\le m:k m^2<\zeta_j\le 2k m^2\right)  \ge \frac{C_1}{\sqrt{k}}
\qquad\mbox{and}\qquad
  \pr\left(\abs{ S_m^{(j)}}>\frac12 km^2\right)\le \frac{C_2}{\sqrt{k}},
\]
for each $1\le j\le m$.
\end{lem}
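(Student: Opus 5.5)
The first inequality will follow from the independence of the steps $\zeta_1,\dots,\zeta_m$ together with Lemma~\ref{lem:basic_bounds}.(3). The second will follow from a truncated second-moment bound based on Lemma~\ref{lem:basic_bounds}.(4). Throughout, set $t:=km^2\ge1$.

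For the first inequality, each $\zeta_j$ is distributed as $\eta-\eta'$. The natural plan is to bound $q:=\pr(t<\zeta_1\le 2t)$ from below by $ct^{-1/2}$. The two-sided event is not given directly by Lemma~\ref{lem:basic_bounds}.(3), since $\pr(\zeta_1>t)-\pr(\zeta_1>2t)$ could a priori be small. I would instead argue as in the proof of part (3):
\[
\pr(t<\zeta_1\le 2t)\ge \pr\left(\tfrac32 t<\eta\le 2t\right)\pr\left(\eta'\le \tfrac12 t\right)\ge c t^{-1/2},
\]
using Lemma~\ref{lem:basic_bounds}.(2) and the fact that $\pr(\eta'\le 1/2)>0$. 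Indeed, if $\eta\in(\tfrac32 t,2t]$ and $\eta'\le t/2$, then $\eta-\eta'\in(t,2t]$. By independence,
\[
\pr(\exists j\le m:\ t<\zeta_j\le 2t)=1-(1-q)^m\ge 1-e^{-qm}.
\]
Here $qm\ge c m/(\sqrt{k}\,m)=c/\sqrt{k}$. Since $1-e^{-x}\ge x/2$ for $x\le 1$, and since $1-e^{-qm}$ is at least a constant $c'$ when $qm>1$, we get a lower bound of $\min(c/(2\sqrt k),\,c')$. This is at least $C_1/\sqrt{k}$ for $k\ge1$ once $C_1$ is small enough.

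For the second inequality, I would split $S_m^{(j)}$ according to whether some step is large. Let $B:=\{\exists i\ne j,\ i\le m:\ |\zeta_i|>t\}$. By a union bound and Lemma~\ref{lem:basic_bounds}.(3), using the symmetry of $\zeta$,
\[
\pr(B)\le 2m\,C t^{-1/2}=2C/\sqrt{k}.
\]
On $B^c$, the sum $S_m^{(j)}$ coincides with the sum of the truncated variables $\zeta_i\ind_{\{|\zeta_i|\le t\}}$. These truncated variables are symmetric, hence mean zero, and independent. Chebyshev's inequality with Lemma~\ref{lem:basic_bounds}.(4) then gives
\[
\pr\left(\Big|\sum_{i\ne j}\zeta_i\ind_{\{|\zeta_i|\le t\}}\Big|>\tfrac t2\right)\le \frac{4m\,C t^{3/2}}{t^2}=\frac{4Cm}{\sqrt t}=\frac{4C}{\sqrt k}.
\]
Adding the two bounds yields $C_2/\sqrt{k}$.

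The only delicate point is the lower bound on the two-sided window probability in the first part. Everything else is a routine union bound together with Chebyshev's inequality applied to truncated, mean-zero variables.
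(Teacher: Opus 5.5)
Your proposal is correct and follows the paper's proof essentially step for step. The paper uses the same window inclusion $\{\tfrac32 t<\eta\le 2t,\ \eta'\le t/2\}\subseteq\{t<\zeta\le 2t\}$ together with Lemma~\ref{lem:basic_bounds}.(2) and independence to bound $1-(1-q)^m$, and the same split into a truncated Chebyshev bound (via part (4)) plus a union bound (via part (3)); the only cosmetic difference is that you turn $1-e^{-qm}$ into a $1/\sqrt{k}$ bound by a case split, whereas the paper uses concavity of $y\mapsto 1-e^{-c_1y}$ on $[0,1]$.
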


\begin{proof}
Define an event $B_j:=\{km^2<\zeta_j\le 2k m^2\}$.
Set $x = km^2$ and let $\eta$ and $\eta'$ be as in Lemma~\ref{lem:basic_bounds}, recalling that $\zeta$ has the same distribution as $\eta-\eta'$. Observe that
\[
\left\{\frac{3}{2}x<\eta\le 2x,\ \eta'\le \frac{x}{2}\right\}\subseteq\{x<\eta-\eta'\le2x\}.
\]
Thus, by independence and Lemma~\ref{lem:basic_bounds}(2), which applies because $x\ge1$,
\begin{equation}\label{eq:pr(Bj)}
p:=\pr(B_j)\ge\pr\left(\frac{3}{2}x<\eta\le 2x\right)\pr\left(\eta'\le \frac{x}{2}\right)\ge \frac{c}{ \sqrt{x}}\pr\left(\eta'\le\frac12\right)=\frac{c_1}{m\sqrt{k}},
\end{equation}
where $c_1>0$ is universal. Since the $(\zeta_j)_{j\ge1}$ are independent, the probability that at least one falls into desired $[km^2,2km^2]$ is $1-(1-p)^m$. The first desired inequality then follows from
\begin{align*}
1-(1-p)^m &\ge 1-\left(1-\frac{c_1}{m\sqrt{k}}\right)^m\ge 1-e^{-c_1/\sqrt{k}}\ge \frac{1-e^{-c_1}}{\sqrt{k}}.
\end{align*}

To derive the second desired inequality, we observe that
\begin{equation}\label{eq:superevents}
    \left\{\abs{ S_m^{(j)}} > \frac{x}{2}\right\} \subseteq \left\{\abs{\sum_{\substack{i\ne j}}\zeta_i\ind_{\{|\zeta_i|\leq x\}}} > \frac{x}{2}\right\} \cup \left\{\exists i \ne j : \abs{\zeta_i}>x\right\}
\end{equation}
This allows us to find separate bounds for each event. For the first, note that each $\zeta_i\ind_{\{|\zeta_i|\leq x\}}$ has mean $0$, so by Lemma~\ref{lem:basic_bounds}.(4) and Chebyshev's inequality, we can fix a constant $c_1$ such that
\begin{equation}\label{eq:event1}
    \pr\left(\abs{\sum_{i\ne j}\zeta_i\ind_{\{|\zeta_i|\leq x\}}} > \frac{x}{2}\right) \le \frac{4\var\left(\sum_{i\ne j}\zeta_i\ind_{\{|\zeta_i|\leq x\}}\right)}{x^2} \le \frac{(m-1)c_1x^{3/2}}{x^2} \le \frac{mc_1}{\sqrt{x}}=\frac{c_1}{\sqrt{k}}
\end{equation}

We bound the remaining term again by Lemma~\ref{lem:basic_bounds}.(3). Since $\zeta_i$ have a symmetric distribution there is a constant $c_2$ such that 
\[
\pr\left(|\zeta_i| > x\right) = 2\pr\left(\zeta_i>x\right) \le \frac{c_2}{\sqrt{x}}.
\]
Using the union bound, the probability of the second event on the right-hand side of \eqref{eq:superevents} 
\begin{equation}\label{eq:event2}
\pr\left(\exists i \ne j \text{ s.t. } |\zeta_i|> x\right) \le m\pr\left(|\zeta_i|> x\right) \le \frac{mc_2}{\sqrt{x}}=\frac{c_2}{\sqrt{k}}.
\end{equation}
Combining \eqref{eq:superevents} with inequalities from \eqref{eq:event1} and \eqref{eq:event2} concludes the proof.
\end{proof}

\begin{lem}\label{lem:2_interval_lemma}
Let $S_n$ be the ATD walk as in~\eqref{eq:ATD walk}. Then there are constants $q_1,q_2>0$ and intervals $[x_1, x_2], [y_1, y_2]$ with $y_1 > x_2+1$ such that, for every integer $m\ge1$,
\[
  \pr\left(-x_2m^2\le  S_m\le -x_1m^2\right)\ge q_1
\qquad\mbox{and}\qquad
  \pr\left(y_1m^2\le  S_m\le y_2m^2\right)\ge q_2.
\]
\end{lem}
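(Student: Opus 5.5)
We will use Lemma~\ref{lem:jump sizes} with a fixed large constant $k$. The idea is that with probability of order $k^{-1/2}$ a single step $\zeta_j$ among the first $m$ lies in $(km^2,2km^2]$. The remaining sum $S_m^{(j)}$ exceeds $\frac12 km^2$ in absolute value with probability at most $C_2k^{-1/2}$. We must choose $k$ so that the first probability dominates a suitable share of the second. The constants $C_1,C_2$ are not directly comparable, so we will reorganize the argument as a disjoint union over $j$.

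For the positive interval, set $x=km^2$ and fix $j$. Let $B_j=\{x<\zeta_j\le 2x\}$; by \eqref{eq:pr(Bj)} we have $\pr(B_j)\ge c_1/(m\sqrt k)$, and by Lemma~\ref{lem:basic_bounds}.(3) also $\pr(B_j)\le C/(m\sqrt k)$. Let $E_j$ be the event that $B_j$ occurs and no other $i\ne j$ in $\{1,\dots,m\}$ has $|\zeta_i|>x$. Then
\[
\pr(E_j)\ge \pr(B_j)\Bigl(1-\tfrac{c_2}{\sqrt k}\Bigr)\ge \frac{c_1}{2m\sqrt k}
\]
for $k$ large, and the events $E_j$, $1\le j\le m$, are disjoint. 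On $E_j$ we would like $|S_m^{(j)}|\le x/2$. This needs a bound better than the second inequality of Lemma~\ref{lem:jump sizes}, whose bound $C_2/\sqrt k$ is of the same order as $\pr(B_j)$ but without the factor $1/m$.

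The point is that $S_m^{(j)}$ is independent of $\zeta_j$, so we multiply instead of subtracting:
\[
\pr\Bigl(B_j\cap\{|S_m^{(j)}|\le x/2\}\Bigr)=\pr(B_j)\,\pr\bigl(|S_m^{(j)}|\le x/2\bigr)\ge \frac{c_1}{m\sqrt k}\Bigl(1-\frac{C_2}{\sqrt k}\Bigr).
\]
On this event $S_m\in[\frac12 km^2,\frac52 km^2]$. To sum over $j$ without double counting, we intersect with $\{|\zeta_i|\le x\ \forall i\ne j\}$. This event is independent of $B_j$ and has probability at least $1-c_2/\sqrt k$. We then bound
\[
\pr\bigl(|S_m^{(j)}|\le x/2,\ |\zeta_i|\le x\ \forall i\ne j\bigr)\ge 1-\frac{C_2+c_2}{\sqrt k}\ge \frac12
\]
for $k\ge k_0$. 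Summing the resulting disjoint events over $j$ gives
\[
\pr\Bigl(\tfrac12 km^2\le S_m\le \tfrac52 km^2\Bigr)\ge \frac{c_1}{2\sqrt k}.
\]
By symmetry of $\zeta$, the same bound holds for $S_m\in[-\frac52km^2,-\frac12km^2]$.

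To obtain the required separation $y_1>x_2+1$, we use two different values of $k$. For the negative side take $k=k_0$, giving $[x_1,x_2]=[\frac12k_0,\frac52k_0]$. For the positive side take $k'$ with $\frac12k'>\frac52k_0+1$, e.g.\ $k'=5k_0+4$, which also satisfies $k'\ge k_0$; this gives $[y_1,y_2]=[\frac12k',\frac52k']$. Then $q_1=c_1/(2\sqrt{k_0})$ and $q_2=c_1/(2\sqrt{k'})$, and both are independent of $m$.

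The main obstacle is exactly the comparison of constants described above. We resolve it by using the independence of $\zeta_j$ from $S_m^{(j)}$, together with disjointness enforced by requiring all other steps to be at most $x$ in absolute value, rather than a union bound. Everything else, including the case $m=1$ where $x=k\ge1$ keeps Lemma~\ref{lem:basic_bounds} applicable, is routine.
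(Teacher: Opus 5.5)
Your argument is correct and is essentially the paper's proof. Both use a single step $\zeta_j\in(km^2,2km^2]$, independence of $\zeta_j$ from $S_m^{(j)}$ together with Lemma~\ref{lem:jump sizes}, symmetry of $S_m$, and two values of $k$ to force $y_1>x_2+1$. The only difference is in handling overlaps: you make the events disjoint by also requiring $|\zeta_i|\le km^2$ for $i\ne j$, whereas the paper keeps the overlapping events $B_j\cap E_j$ and subtracts the second Bonferroni term $\sum_{i<j}\pr(B_i)\pr(B_j)\le c_6/k$, which is of lower order than $c_3/\sqrt{k}$.
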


\begin{proof}
Fix $k\ge1$ that will be specified later. For $1\le j\le m$, define 
\[
B_j:=\{km^2<\zeta_j\le2km^2\} \quad\mbox{and}\quad E_j:=\left\{-\frac12 km^2\le S_m^{(j)}\le \frac12 km^2\right\}.
\]
Since $S_m=\zeta_j+S_m^{(j)},$ we have
\[
  \bigcup_{j=1}^m(B_j\cap E_j)
  \subseteq
  \left\{\frac12 km^2<S_m\le \frac{5}{2}km^2\right\}.
\]

To lower-bound the probability of the union on the left-hand side we use the first two terms from the inclusion-exclusion formula

\begin{equation}\label{eq:incl-excl}
  \pr\left(\bigcup_{j=1}^m(B_j\cap E_j)\right)\ge\sum_{j=1}^m\pr(B_j\cap E_j)-\sum_{1\le i<j\le m}\pr(B_i\cap E_i\cap B_j\cap E_j)
\end{equation}

To bound the first term in \eqref{eq:incl-excl}, recall that for each
fixed $j$ by \eqref{eq:pr(Bj)} we have $\pr(B_j)\ge{c_1}/{m\sqrt{k}}.$ Since $B_j$ and $E_j$ are independent, Lemma~\ref{lem:jump sizes} gives that
\[
  \pr(E_j^c)=\pr\left(|S_m^{(j)}|>\frac12 km^2\right)\le\frac{c_2}{\sqrt{k}}.
\]
Choose $k$ large enough that $\sqrt{k}\ge 2c_2$, then $\pr(E_j)\ge1/2,$ and therefore
\[
  \pr(B_j\cap E_j) = \pr(B_j)\pr(E_j)  \ge  \frac{c_3}{m\sqrt{k}}.
\]
Summing over $j$, we get
\[
  \sum_{j=1}^m\pr(B_j\cap E_j)  \ge  \frac{c_3}{\sqrt{k}}.
\]

Next, we upper bound the second term in \eqref{eq:incl-excl}. Note that by Lemma~\ref{lem:basic_bounds}.(3)
\[
  \pr(B_j)\le \pr(\zeta_j>km^2)
  \le
  \frac{c_4}{m\sqrt{k}}.
\]
Since $B_i$ and $B_j$ are independent for $i\ne j$, we then have 
$$
\pr(B_i\cap B_j)=\pr(B_i)\pr(B_j)\le\frac{c_5}{m^2k}.
$$
Therefore
\[
    \sum_{1\le i<j\le m}\pr(B_i\cap E_i\cap B_j\cap E_j)\le\sum_{1\le i<j\le m}\pr(B_i\cap B_j)\le\frac{c_6}{k}.
\]

Recalling \eqref{eq:incl-excl} and choosing $k$ large enough so that $\sqrt{k}\ge 2c_6/c_3$ we get
\begin{equation}\label{eq:main_bound}
  \pr\left(\frac12 km^2<S_m\le \frac{5}{2}km^2\right)\ge\pr\left(\bigcup_{j=1}^mB_j\cap E_j\right)\ge  \frac{c_3}{\sqrt{k}}-\frac{c_6}{k}\ge \frac{c_3}{2\sqrt{k}}=:q_k>0.
\end{equation}
By symmetry of $S_m$ we conclude
\[
  \pr\left(-\frac{5}{2} km^2<S_m\le -\frac12 km^2\right)\ge q_k>0.
\]
Choose $k_0$ large enough that the second inequality is satisfied, and set $x_1:=\frac12 k_0$,  $x_2:=\frac{5}{2}k_0,$  and  $q_1:=q_{k_0}$, This gives the desired first inequality. Next choose $k_1$ sufficiently large so that $ k_1> 2(x_2+1)=5k_0+2.$
Applying \eqref{eq:main_bound} with $k_1$ and setting  $y_1:=\frac12 k_1$,  $y_2:=\frac{5}{2}k_1,$ and $q_2:=q_{k_1}$ completes the proof.
\end{proof}

We are finally ready to give a proof of Lemma~\ref{lem:oneblock-crossing} completing the section.

\begin{proof}[Proof of Lemma~\ref{lem:oneblock-crossing}]
    Let the constants $x_1,x_2,y_1,y_2,q_1,q_2$ be given by Lemma~\ref{lem:2_interval_lemma}. Fix $m\ge1$ and $u$ satisfying $0\le u\le m^2$ and consider the two events
    \[
    A:=\left\{-x_2m^2\le  S_m\le -x_1m^2\right\}\qquad\mbox{and}\qquad
    B:=\left\{y_1m^2\le  S_{2m}- S_m\le y_2m^2\right\}.
    \]
    The random variable $ S_{2m}- S_m$ has the same distribution as $ S_m$ and is independent of from it. By Lemma~\ref{lem:2_interval_lemma} we have $\pr(A)\ge q_1$ and $\pr(B)\ge q_2$ and hence
    \[
    \pr(A\cap B)=\pr(A)\pr(B)\ge q_1q_2.
    \]

    We now show that on $A\cap B$, the sequence $(Y_{u,n})$ crosses $0$ at least once between $m+1$ and $2m$. On $A$, we have
    \[
        Y_{u,m}= S_m-u\le -x_1m^2-u<0,
    \]
    while on $A\cap B$ we have
    \[
         S_{2m}= S_m+( S_{2m}- S_m)\ge-x_2m^2+y_1m^2=(y_1-x_2)m^2.
    \]
    Since $u\le m^2$ and $x_2+1<y_1$ we derive that
    \[
        Y_{u,2m}= S_{2m}-u\ge(y_1-x_2)m^2-m^2=(y_1-x_2-1)m^2>0.
    \]
    Thus, the desired event contains $A\cap B$ and is also of probability at least $p:=q_1q_2>0$.
\end{proof}

\section{Proof of Theorem~\ref{thm:main}}\label{sec:main-proof}
As before let $X^{(1)}_t$ and $X^{(2)}_t$ be two independent continuous time random walks on $\bZ$ started at started at $0$ and $-d$, respectively.
Assume $d\le N/2$ and identify the cycle $C_N$ with $\bZ/N\bZ$.
Then considering these walks $\mathrm{mod}\,N$, gives two independent continuous time random walks $W^{(1)}_t$ and $W^{(2)}_t$ on $C_N$ started at $0$ and $N-d$, respectively.
Decompose $C_N$ into two arcs, setting $L:=N-d$ for convenience
\[
\cA_{\rm long}=C_N|_{\{0,1,\ldots,L\}} \qquad\mbox{and}\qquad \cA_{\rm short}=C_N|_{\{0,-1,\ldots,-d\}}.
\]

To establish Theorem~\ref{thm:main} we will focus on bounding the expected size of the interface in $\cA_{\rm long}$. 
Clearly, that is enough for the lower bound. On the other hand, for the upper bound we will rely on the fact that Proposition~\ref{prop:ATD_upper_bdd} holds even when $M<d$, making the short arc behave analogously (setting $d'=N-d$ and $M'=d$). We do so primarily to make the reduction to the setting of Section~\ref{sec:ATD-walk} more clear, avoid repetition and introducing even more convoluted notation.

Recalling notation of hitting times $\tau^{(i)}_n$ of a vertex $n$ by $X^{(i)}_t$ we define the induced hitting for $W^{(i)}_t$ on $\cA_{\rm long}$ by setting
$$
\gs_n^{(i)}:=\min\left\{\tau^{(i)}_n,\tau^{(i)}_{n-N}\right\},
$$
for each $n$ such that $W^{(i)}_0\in[n-N,n]$ (seen as the arc of $C_N$ that contains $0$).
As before note that the random variables $\gs_n^{(i)}$, for each $n\in C_N$ and $i\in\{1,2\}$, are almost sure distinct.

The upper bound in Theorem~\ref{thm:main} follows directly from Proposition~\ref{prop:ATD_upper_bdd} after decomposition of the cycle into three (random) intervals.

\begin{prop}\label{prop:cycle_upper_bound}
In the situation of Theorem~\ref{thm:main}. There is a universal constant $C<\infty$ such that
\[
\E\abs{\cI_{C_N}}\le C\ln\left(1+\frac{N}{d}\right).
\]
\end{prop}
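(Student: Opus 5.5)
The plan is to reduce the cycle interface to interfaces of the two unrolled walks on $\bZ$, on arcs where the cycle's first-visit times agree with the $\bZ$ hitting times, and then apply Proposition~\ref{prop:ATD_upper_bdd} to each piece.

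Lift the walks to $\bZ$: $X^{(1)}$ starts at $0$ and $X^{(2)}$ at $-d$, exactly as in the setup. For $i\in\{1,2\}$, the range of $W^{(i)}$ up to time $t$ is the projection of the interval $[\min_{s\le t}X^{(i)}_s,\max_{s\le t}X^{(i)}_s]$. The interval between the two starting points $-d$ and $0$ is covered by the walks from its two ends. On the long arc, walker~1 explores $0,1,2,\dots$ to the right and walker~2 explores $-d-1,-d-2,\dots$ to the left, which is $N-d-1,N-d-2,\dots$ mod $N$. The short arc is symmetric: walker~1 goes left from $0$ and walker~2 goes right from $-d$.

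Next, control which explorations matter before the cycle is fully claimed. Let $T$ be the first time the two ranges meet on each arc. Before $T$, every vertex is claimed by a walker that reached it \emph{directly} along the arc between the starting points. A walker can reach a vertex "around the back" only after passing through the other walker's starting point, and that point is already claimed. So the first-visit comparison on a given vertex of the long arc is exactly $\sigma^{(1)}_n$ versus $\sigma^{(2)}_n$, and what matters is the competition of walker~1 moving right against walker~2 moving left.

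The cleanest decomposition is the one suggested in the text: split the cycle into three random intervals.
\begin{itemize}
\item The first is the set claimed by walker~1 going right until the first vertex where the two right-going/left-going fronts cross on the long arc.
\item The second is the analogous set on the short arc.
\item The third is the remaining middle part.
\end{itemize}
Alternatively, and more simply, I would bound the interface edges by those where some walker is the first arrival via the $\bZ$-lift. Every interface edge of $C_N$ lies in the long arc or the short arc. An interface edge $(n-1,n)$ of the long arc is an edge where $\sigma^{(1)}-\sigma^{(2)}$ changes sign. The key claim is that this sign coincides with the sign of $\tau^{(1)}_n-\tau^{(2)}_n$ computed on $\bZ$ for walker~1 at $0$ and walker~2 at $-d$, restricted to $n\in[0,L]$. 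This holds only up to the first meeting and may fail beyond it. I would instead dominate: each sign change of the cycle comparison on the long arc is a sign change of $(\tau^{(1)}_n-\tau^{(2)}_n)$ or of the mirror comparison counted from the other end. This gives $\E|\cI_{\cA_{\rm long}}|\le 2\,\E|\cI_{[0,L]}|\le 2C\ln(1+L/d)\le 2C\ln(1+N/d)$ by Proposition~\ref{prop:ATD_upper_bdd}. For the short arc the two walkers start at distance $d'=N-d\ge N/2$ in the lifted picture where walker~2 is approached from the far side, with $M'=d$. Proposition~\ref{prop:ATD_upper_bdd} then gives $C\ln(1+d/(N-d))\le C\ln 3$, and this is $\le C'\ln(1+N/d)$ since $N/d\ge2$.

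The main obstacle is making the identification of cycle first-visit times with $\bZ$ hitting times rigorous, i.e.\ justifying the domination by the two $\bZ$-interfaces. I would handle this by working with the random split point. On the long arc, let $R$ be the vertex where the rightward front of walker~1 (i.e.\ $\tau^{(1)}_n$) and the leftward front of walker~2 (i.e.\ $\tau^{(2)}_{n-N}$) first coincide in ordering. On $[0,R]$ I compare $\sigma^{(i)}$ via the lift from the left end; on $[R,L]$ I compare via the mirrored lift from the right end. On each of these random subintervals, $\min\{\tau^{(i)}_n,\tau^{(i)}_{n-N}\}$ is attained by the term relevant to that side for the walker that claims the vertex. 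Hence every interface edge in $[0,R]$ is a sign change of $\tau^{(1)}_n-\tau^{(2)}_n$ for $n\in[0,L]$, and similarly on the other side. Monotonicity of interface counts in the interval, i.e.\ $|\cI_{[0,R]}|\le|\cI_{[0,L]}|$, then yields the bound without needing the distribution of $R$. This gives three pieces in total: two on the long arc and one on the short arc.
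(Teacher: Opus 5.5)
Your overall reduction matches the paper's. You dominate the long-arc interface by the sign changes of $\tau^{(1)}_n-\tau^{(2)}_n$ and of the mirrored comparison $\tau^{(1)}_{n-N}-\tau^{(2)}_{n-N}$ over $n\in[0,L]$, apply Proposition~\ref{prop:ATD_upper_bdd} to each (restricting to a subinterval only lowers the count), and treat the short arc with $d'=N-d$, $M'=d$.

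The gap is in the step you flag as the main obstacle: the pathwise domination, as you state it, is false, and so is its justification. Take $R$ to be the first $n$ with $\tau^{(1)}_n>\tau^{(2)}_{n-N}$. It is not true that for $n<R$ the claiming walker's $\sigma$ is attained by its left-side term. For $1\le n<L$ the configuration $\tau^{(1)}_{n-N}<\tau^{(2)}_n<\tau^{(1)}_n<\tau^{(2)}_{n-N}$ has positive probability. There $n<R$, and walker~1 claims $n$ via the wrap-around route even though $\tau^{(1)}_n-\tau^{(2)}_n>0$.

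More decisively, the edge at the split point is never counted. Consider the positive-probability event $\max\{\tau^{(1)}_L,\tau^{(2)}_{-N}\}<\min\{\tau^{(2)}_0,\tau^{(1)}_{-d}\}$. On it, $\sigma^{(1)}_n=\tau^{(1)}_n$ and $\sigma^{(2)}_n=\tau^{(2)}_{n-N}$ for all $n\in[0,L]$, so $|\cI_{\cA_{\rm long}}|=1$ (the edge $\{R-1,R\}$). Yet $\tau^{(1)}_n<\tau^{(2)}_n$ and $\tau^{(1)}_{n-N}>\tau^{(2)}_{n-N}$ for every $n\in[0,L]$, so neither comparison changes sign.

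Your earlier heuristic, that only walker~1 moving right against walker~2 moving left matters, is also wrong. It would give a single edge on the long arc. The $\ln(N/d)$ interleaving comes precisely from walker~2 reaching $n\ge 1$ after passing through $0$.

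The missing ingredient is the monotonicity the paper uses. For $n\in[0,L]$, $\tau^{(i)}_n$ is nondecreasing and $\tau^{(i)}_{n-N}$ is nonincreasing in $n$. Hence each $\sigma^{(i)}_n$ switches representation at most once, splitting the long arc into at most three intervals. On the mixed interval, $\sigma^{(1)}_n-\sigma^{(2)}_n$ is monotone and so changes sign at most once. This gives pathwise $|\cI_{\cA_{\rm long}}|\le 6+{}$(the two sign-change counts). The additive constant is harmless because $\ln(1+N/d)\ge\ln 3$.

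Your $R$-split can be repaired the same way:
\begin{itemize}
\item For $n<R$, walker~1 claims $n$ iff $\min\{\tau^{(1)}_n,\tau^{(1)}_{n-N}\}<\tau^{(2)}_n$. The set $\{n:\tau^{(1)}_{n-N}<\tau^{(2)}_n\}$ is an up-set in $n$, so it costs at most one extra edge beyond the sign changes of $\tau^{(1)}_n-\tau^{(2)}_n$.
\item For $n\ge R$, walker~2 claims $n$ iff $\min\{\tau^{(2)}_n,\tau^{(2)}_{n-N}\}<\tau^{(1)}_{n-N}$. The set $\{n:\tau^{(2)}_n<\tau^{(1)}_{n-N}\}$ is a down-set, again costing at most one extra edge.
\item Adding the edge at $R$ gives $|\cI_{\cA_{\rm long}}|\le 3+{}$(the two counts).
\end{itemize}
The same boundary correction is needed on the short arc.
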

\begin{proof}

First we rewrite the size of the interface $\cI_{\cA_{\rm long}}$ via arrival times similar to \eqref{eq:ATD-crossing-repr}
$$
\abs{\cI_{\cA_{\rm long}}}=\sum_{n=0}^{L-1}\ind_{\left\{(\gs_n^{(1)}-\gs_n^{(2)})(\gs_{n+1}^{(1)}-\gs_{n+1}^{(2)})<0\right\}}.
$$

For each $i\in\{1,2\}$, the sequence
$
\tau_n^{(i)}
$
is nondecreasing in $n\in\{0,1,\ldots, L\}$, while
$
\tau_{n-N}^{(i)}
$
is nonincreasing in $n\in\{0,1,\ldots, L\}$. Thus, as $n$ moves along
$\cA_{\rm long}$, the minimum defining $\gs_n^{(i)}$ can change from
$\tau_n^{(i)}$ to $\tau_{n-N}^{(i)}$ at most once.

Hence, the two walks therefore partition $\cA_{\rm long}$ into three (possibly empty) intervals:
\begin{itemize}
    \item[$I_1$:] both $\gs_n^{(i)}$ are given by $\tau_n^{(i)}$;
    \item[$I_2$:] both $\gs_n^{(i)}$ are given by $\tau_{n-N}^{(i)}$;
    \item[$I_3$:] $\gs_n^{(1)}$ and $\gs_n^{(2)}$ are given by $\tau_n^{(i)}$ and $\tau_{n-N}^{(j)}$ for the appropriate pairing $i,j\in\{1,2\}$.
\end{itemize}

Note that in the last case, the expression $\tau_n^{(1)}-\tau_{n-N}^{(2)}$ is nondecreasing in $n$, while $\tau_{n-N}^{(1)}-\tau_n^{(2)}$ is nonincreasing. 
In particular, on $I_3$, the $\gs_n^{(1)}-\gs_n^{(2)}$ can change sign at most once, creating at most one edge in $\cI_{\cA_{\rm long}}$.
Accounting for possible interface edges adjacent at the places where the induced ATD walk crosses zero we obtain
\begin{align*}
\abs{\cI_{\cA_{\rm long}}}&\le 6+\sum_{n=0}^{L-1}\ind_{\left\{(\tau_n^{(1)}-\tau_n^{(2)})(\tau_{n+1}^{(1)}-\tau_{n+1}^{(2)})<0\right\}}+\sum_{n=0}^{L-1}\ind_{\left\{(\tau_{n-N}^{(1)}-\tau_{n-N}^{(2)})(\tau_{n+1-N}^{(1)}-\tau_{n+1-N}^{(2)})<0\right\}}.
\end{align*}

Note that by symmetry the sums both sums have the same distribution, which also coincides with the distribution of the interface $\cI_{[0,L]}$ for the competition on $\bZ$ considered in Section~\ref{sec:ATD-walk}. Thus, applying Proposition~\ref{prop:ATD_upper_bdd} gives
$$
\E\abs{\cI_{\cA_{\rm long}}}\le C\left(1+\ln\left(1+\frac Ld\right)\right).
$$

Since Proposition~\ref{prop:ATD_upper_bdd} does not rely on $d<M$, the exact same argument for $\cI_{\cA_{\rm short}}$ yields
$$
\E\abs{\cI_{\cA_{\rm short}}}\le C\left(1+\ln\left(1+\frac{d}{L}\right)\right)\le C.
$$
Finally, using that $L=N-d$ and $N/d\ge2$, changing the constant gives the desired upper bound
$$
\E\abs{\cI_{C_N}}\le C\ln\left(1+\frac{N}{d}\right),
$$
for some constant $C>0$.
\end{proof}

As we mentioned before the lower bound does not follow directly from Proposition~\ref{prop:ATD_lower_bdd} and instead our argument will follows similar steps. The next lemma compares crossings of the ATD walk with interfaces on the long arc of the cycle.  The only possible discrepancy is that one of the
$W^{(i)}_t$ reaches a vertex through the other direction of the cycle.

\begin{lem}\label{lem:cycle_block}
For $n\ge0$ and $d\ge1$, let $Y_{U,n}$ be the shifted ATD walk as in \eqref{eq:shifted ATD walk} with $U=\tau^{(2)}_{0}$.
Suppose that $m\ge1$ and $2m<N-d.$ Then
\begin{align*}
&\pr\left(\exists n\in\{m+1,\ldots,2m\}:(n-1,n)\in\cI_{\cA_{\rm long}}\right)\\
&\qquad\ge\pr\left(\exists n\in\{m+1,\ldots,2m\}:Y_{U,n-1}Y_{U,n}<0\right)-\frac{4m+d}{N}.
\end{align*}
\end{lem}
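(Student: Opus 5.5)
The idea is to show that, outside an event of probability at most $(4m+d)/N$, neither walker reaches any vertex $n\in\{m,\ldots,2m\}$ ``the long way around''. On that good event the induced hitting times $\gs^{(i)}_n$ coincide with the line hitting times $\tau^{(i)}_n$. Then the interface edges of $\cA_{\rm long}$ in the block are exactly the zero-crossings of $Y_{U,n}$, and the lemma follows from $\pr(E\cap G)\ge\pr(E)-\pr(G^c)$.

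First I would define the good event
\[
G:=\left\{\tau^{(1)}_{2m}<\tau^{(1)}_{2m-N}\right\}\cap\left\{\tau^{(2)}_{2m}<\tau^{(2)}_{2m-N}\right\}.
\]
On $G$, for every $n\in\{0,1,\ldots,2m\}$ and each $i$ we have $\tau^{(i)}_n<\tau^{(i)}_{n-N}$, which I would check separately for each walker.
\begin{itemize}
\item Walker $1$ starts at $0$. By continuity of nearest-neighbour paths, it visits every $n\in[0,2m]$ no later than $\tau^{(1)}_{2m}$.
\item Walker $2$ starts at $-d$, and $-d<0\le n\le 2m$. So before time $\tau^{(2)}_{2m}$ it must pass through every $n\in[-d,2m]$, in particular through all $n\in[0,2m]$.
\item For both walkers, $n-N\le 2m-N<-d\le0$ because $2m<N-d$. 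So reaching $n-N$ requires first passing $2m-N$, which on $G$ happens only after $\tau^{(i)}_{2m}$.
\end{itemize}
Hence on $G$ we get $\gs^{(i)}_n=\tau^{(i)}_n$ for all $n\in\{m,\ldots,2m\}$ and $i\in\{1,2\}$. Since all these times are a.s.\ distinct, the edge $(n-1,n)$ lies in $\cI_{\cA_{\rm long}}$ if and only if
\[
(\tau^{(1)}_{n-1}-\tau^{(2)}_{n-1})(\tau^{(1)}_n-\tau^{(2)}_n)<0,
\]
that is, if and only if $Y_{U,n-1}Y_{U,n}<0$, by the identity $Y_{U,n}=\tau^{(1)}_n-\tau^{(2)}_n$ from Section~\ref{sec:ATD-walk}.

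Next I would bound $\pr(G^c)$ by the gambler's ruin formula for the embedded discrete-time simple random walk, which is a martingale, on the interval $[2m-N,2m]$ of length $N$.
\begin{itemize}
\item Walker $1$ starts at distance $2m$ from the right endpoint, so it hits $2m-N$ before $2m$ with probability $2m/N$.
\item Walker $2$ starts at $-d$, at distance $2m+d$ from the right endpoint, so it hits $2m-N$ first with probability $(2m+d)/N$.
\end{itemize}
A union bound gives $\pr(G^c)\le(4m+d)/N$.

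Finally, let $E$ be the event $\{\exists n\in\{m+1,\ldots,2m\}:Y_{U,n-1}Y_{U,n}<0\}$. The previous step shows that $E\cap G$ is contained in the event on the left-hand side of the lemma, so
\[
\pr(\text{left-hand event})\ge\pr(E\cap G)\ge\pr(E)-\pr(G^c)\ge\pr(E)-\frac{4m+d}{N}.
\]

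The only delicate step is the first one: checking that the single event $G$ rules out wrap-around simultaneously for all vertices in the block and for both walkers. This uses the ordering $n-N\le 2m-N<-d\le 0\le n\le 2m$, which comes from the hypothesis $2m<N-d$. Once that ordering is in place, the remainder is routine gambler's-ruin computation.
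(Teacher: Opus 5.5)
Your proposal is correct and follows essentially the same route as the paper. The paper uses the same good event $G$, and the same ordering $n-N\le 2m-N<-d\le 0\le n\le 2m$ to get $\gs^{(i)}_n=\tau^{(i)}_n$ on the block for both walkers. It then applies the same gambler's-ruin bounds $2m/N$ and $(2m+d)/N$ and concludes via $\pr(E\cap G)\ge\pr(E)-\pr(G^c)$.
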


\begin{proof}
Define
\[
G_m^{(1)}:=\left\{\tau_{2m}^{(1)}<\tau_{2m-N}^{(1)}\right\},
\qquad
G_m^{(2)}:=\left\{\tau_{2m}^{(2)}<\tau_{2m-N}^{(2)}\right\},
\]
and set $G_m:=G_m^{(1)}\cap G_m^{(2)}.$
Since $2m<N-d$, both starting points $0$ and $-d$ belong to the interval $[2m-N,2m].$
On $G_m^{(1)}$, for every $m\le n\le2m$, we have
\[
\tau_n^{(1)}\le\tau_{2m}^{(1)}<\tau_{2m-N}^{(1)}\le\tau_{n-N}^{(1)}.
\]
Similarly, on $G_m^{(2)}$, analogous inequality hold
\[
\tau_n^{(2)}\le\tau_{2m}^{(2)}<\tau_{2m-N}^{(2)}\le\tau_{n-N}^{(2)}.
\]
In particular, on $G_m$ we have that $\gs_n^{(i)}=\tau_n^{(i)},$ for $m\le n\le2m$ and $i\in\{1,2\}.$ Therefore, on $G_m$, every crossing of zero by the ATD walk $Y_{U,n}$ between $n$ and $n+1$, for $m\le n<2m$, produces an interface edge $\{n-1,n\}$ on the long arc.
It follows that
\begin{align*}
&\pr\left(\exists n\in\{m+1,\ldots,2m\}:(n-1,n)\in\cI_{\cA_{\rm long}}\right)\\
&\qquad\qquad\ge\pr\left(\left\{\exists n\in\{m+1,\ldots,2m\}:Y_{U,n-1}Y_{U,n}<0\right\}\cap G_m\right)\\
&\qquad\qquad\ge\pr\left(\exists n\in\{m+1,\ldots,2m\}:Y_{U,n-1}Y_{U,n}<0\right)-\pr(G_m^c).
\end{align*}
It remains to show that $\pr(G_m^c)\le{(4m+d)}/{N}$. This follows directly from the classical gambler's ruin problem; indeed we have $\pr((G_m^{(1)})^c)={2m}/{N}$ and $\pr((G_m^{(2)})^c)=(2m+d)/{N}$, using that $X^{(2)}_0=-d$.
\end{proof}

\begin{proof}[Proof of Theorem~\ref{thm:main}]
Since the upper bound is given in Proposition~\ref{prop:cycle_upper_bound}, it remains to prove the lower bound.

Let $p>0$ be the universal constant given by Lemma~\ref{lem:oneblock-crossing}.  
Using Lemma~\ref{lem:tau-tail}, choose a universal constant $A>1$ sufficiently large that whenever $m\ge Ad$ we have
\begin{equation*}
    \pr(U\le m^2)\ge\frac12.
\end{equation*}
Here we enforce that $A>1$ for convince to guarantee that $m>d$.

Using similar conditioning trick as in the proof of Proposition~\ref{prop:ATD_lower_bdd}, conditioning on $U$ and assuming that $m\ge Ad$, we derive the following.  By Lemma~\ref{lem:oneblock-crossing} and the independence of $U$ and $( S_n)_{n\ge1}$, for every $u\le m^2$ we get
\begin{align*}
&\pr\left(\exists n\in\{m+1,\ldots,2m\}:Y_{U,n-1}Y_{U,n}<0\right)\\
&\qquad\ge\E\left(\ind_{\{U\le m^2\}}\pr\left(\left.\exists n\in\{m+1,\ldots,2m\}:Y_{U,n-1}Y_{U,n}<0\right|U\right)\right)\ge p\pr(U\le m^2)\ge\frac{p}{2}.
\end{align*}

We now choose a universal constant $\eps\in (0,{p}/{10})$ and further assume $2m\le\eps N.$ Note this together with $m\ge Ad> d$ implies that $d<N/2$ and thus $2m<N-d$. Lemma~\ref{lem:cycle_block} then gives for every $Ad\le m\le\eps N/2$
\begin{equation}\label{eq:cycle_block_probability}
\pr\left(\exists n\in\{m+1,\ldots,2m\}:(n-1,n)\in\cI_{\cA_{\rm long}}\right)\ge\frac{p}{2}-\frac{4m+d}{N}\ge\frac{p}{4},
\end{equation}
where in the last inequality we used that $\eps<p/10$, $2m\le\eps N$, and $m> d$ to get
\[
\frac{4m+d}{N}\le\frac{5m}{N}\le\frac52\eps\le \frac{p}{4}.
\]

The rest of the proof is similar to the proof of Proposition~\ref{prop:ATD_lower_bdd}, we apply this bound on a sequence of exponentially growing blocks. 

First, $m_0:=\lceil Ad\rceil$, for $j\in\bZ_+$ define $m_j:=2^jm_0,$ and let
\[
\cJ:=\left\{j\in\bZ_+:m_j\le\eps N\right\}.
\]
Since the sequence of blocks $\{m_{j=1}+1,\ldots,2m_j\}$ is edge-disjoint, by applying \eqref{eq:cycle_block_probability}, we obtain
\begin{equation*}
    \E\abs{\cI_{C_N}}
\ge\E\left(\sum_{j\in\cJ}\ind_{\left\{\exists n\in\{m_{j-1},\ldots,2m_j\}:(n-1,n)\in\cI_{\cA_{\rm long}}\right\}}\right)\ge \frac{p}{4}\abs{\cJ}.
\end{equation*}

Since $m_0\le(A+1)d$, the number of the blocks in the sequence is at most
\[
\abs{\cJ}\ge c_1\ln\left(\frac Nd\right)-C_1
\]
for universal constants $c_1,C_1>0$.  Therefore
\[
\E\abs{\cI_{C_N}}
\ge
c_2\ln\left(\frac Nd\right)-C_2.
\]

In case of $\cI_{C_N}$ removing the negative constant is from the expression above is actually easier than for $\cI_{[0,M]}$, where we had to rely on the uniform bound from Lemma~\ref{lem:uniform_lower_bd}. Indeed, since the initial positions of $W^{(1)}_t$ and $W^{(2)}_t$ are distinct, the interface on the cycle automatically contains at least $2$ edges almost surly. After changing the universal constant, we conclude that $\E\abs{\cI_{C_N}}\ge c\ln\left(1+N/d\right).$
\end{proof}

\bibliographystyle{abbrvurl}
\bibliography{rw} 
\end{document}

%% file: style.tex
\usepackage{mathrsfs,xfrac} 
\usepackage[colorlinks=true,linkcolor=blue,citecolor=blue,urlcolor=blue]{hyperref} 
\usepackage{amsmath,amssymb,amsthm,amsfonts,amsbsy,latexsym,dsfont,color,graphicx,enumitem}
\usepackage{caption}
\usepackage{regexpatch}
\usepackage{comment}
\makeatletter
\usepackage{todonotes}
\xpatchcmd{\@todo}{\setkeys{todonotes}{#1}}{\setkeys{todonotes}{inline,#1}}{}{}
\makeatother

\newtheorem{thm}{Theorem}[section]
\newtheorem{lem}[thm]{Lemma}

\newtheorem{prop}[thm]{Proposition}

\renewcommand{\le}{\leqslant}  
\renewcommand{\ge}{\geqslant}

\newcommand{\ind}{\mathds{1}}
\newcommand{\eps}{\varepsilon}

\newcommand{\abs}[1]{\left\vert#1\right\vert}

              \let\gs=\sigma

\newcommand{\cA}{\mathcal{A}}

\newcommand{\cI}{\mathcal{I}}
\newcommand{\cJ}{\mathcal{J}}

\newcommand{\bC}{\mathbb{C}}

\newcommand{\bN}{\mathbb{N}}
\newcommand{\bR}{\mathbb{R}}

\newcommand{\bZ}{\mathbb{Z}}        

\DeclareMathOperator{\E}{\mathds{E}}
\DeclareMathOperator{\pr}{\mathds{P}}

\DeclareMathOperator{\var}{Var}

\newcommand{\wh}[1]{\widehat{#1}}


%% file: rw.bib
@book{feller1991introduction,
  title={An introduction to probability theory and its applications},
  author={Feller, W.},
  volume={2},
  year={1991},
  publisher={John Wiley \& Sons}
}

@article {CollabRW,
    AUTHOR = {Dey, P.~S. and Kim, D. and Terlov, G.},
     TITLE = {Collaboration of random walks on graphs},
   JOURNAL = {Proc. Amer. Math. Soc.},
  FJOURNAL = {Proceedings of the American Mathematical Society},
    VOLUME = {153},
      YEAR = {2025},
    NUMBER = {11},
     PAGES = {4961--4974},
      ISSN = {0002-9939,1088-6826},
   MRCLASS = {60G50 (05C81 60B15 60F99)},
  MRNUMBER = {4971586},
MRREVIEWER = {Michael\ Voit},
       DOI = {10.1090/proc/17332},
}

@article {Hermon25,
    AUTHOR = {Hermon, J. and Sousi, P.},
     TITLE = {Covering a graph with independent walks},
   JOURNAL = {Electron. J. Probab.},
  FJOURNAL = {Electronic Journal of Probability},
    VOLUME = {30},
      YEAR = {2025},
     PAGES = {Paper No. 178, 24},
      ISSN = {1083-6489},
   MRCLASS = {60J10 (60J27)},
  MRNUMBER = {4987417},
MRREVIEWER = {Yoshihiro\ Abe},
       DOI = {10.1214/25-ejp1348},
}

@article{rivera2023multiple,
  title={Multiple random walks on graphs: Mixing few to cover many},
  author={Rivera, N. and Sauerwald, T. and Sylvester, J.},
  journal={Combinatorics, Probability and Computing},
  volume={32},
  number={4},
  pages={594--637},
  year={2023},
  publisher={Cambridge University Press}
}

@inproceedings{sauerwald2010expansion,
  title={Expansion and the cover time of parallel random walks},
  author={Sauerwald, T.},
  booktitle={Proceedings of the 29th ACM SIGACT-SIGOPS symposium on Principles of distributed computing},
  pages={315--324},
  year={2010}
}

@article{patel2016hitting,
  title={The hitting time of multiple random walks},
  author={Patel, R. and Carron, A. and Bullo, F.},
  journal={SIAM Journal on Matrix Analysis and Applications},
  volume={37},
  number={3},
  pages={933--954},
  year={2016},
  publisher={SIAM}
}

@inproceedings{ivaskovic2017multiple,
  title={Multiple random walks on paths and grids},
  author={Ivaskovic, A. and Kosowski, A. and Pajak, D. and Sauerwald, T.},
  booktitle={34th Symposium on Theoretical Aspects of Computer Science (STACS 2017)},
  year={2017},
  organization={Schloss-Dagstuhl-Leibniz Zentrum f{\"u}r Informatik}
}

@article{elsasser2011tight,
  title={Tight bounds for the cover time of multiple random walks},
  author={Els{\"a}sser, R. and Sauerwald, T.},
  journal={Theoretical Computer Science},
  volume={412},
  number={24},
  pages={2623--2641},
  year={2011},
  publisher={Elsevier}
}

@inproceedings{efremenko2009well,
  title={How well do random walks parallelize?},
  author={Efremenko, K. and Reingold, O.},
  booktitle={International Workshop on Approximation Algorithms for Combinatorial Optimization},
  pages={476--489},
  year={2009},
  organization={Springer}
}

@article{Alon_at_al, 
title={Many Random Walks Are Faster Than One}, 
volume={20}, DOI={10.1017/S0963548311000125}, 
number={4}, 
journal={Combinatorics, Probability and Computing},
author={Alon, N. and Avin, C. and Koucký, M. and Kozma, G. and Lotker, Z. and Tuttle, M. R.}, 
year={2011},
pages={481–502}
}

@article {DK21,
    AUTHOR = {Dey, P.~S. and Kim, D.},
     TITLE = {{F}luctuation results for size of the vacant set for random walks on discrete torus}, 
   JOURNAL = {ALEA Lat. Am. J. Probab. Math. Stat.},
  FJOURNAL = {ALEA. Latin American Journal of Probability and Mathematical Statistics},
    VOLUME = {22},
      YEAR = {2025},
     PAGES = {209–232},
       DOI = {10.30757/ALEA.v22-08},
}

@article{regnier2023universal,
  title={Universal exploration dynamics of random walks},
  author={R{\'e}gnier, L. and Dolgushev, M. and Redner, S. and B{\'e}nichou, O.},
  journal={Nature Communications},
  volume={14},
  number={1},
  pages={618},
  year={2023},
  publisher={Nature Publishing Group UK London},
}

@article{regnier2024maximum,
  title={From maximum of inter-visit times to starving random walks},
  author={R{\'e}gnier, L. and Dolgushev, M. and B{\'e}nichou, O.},
  journal={Physical Review Letters},
  volume={132},
  number={12},
  pages={127101},
  year={2024},
  publisher={APS},
}

@article{Bansaye24,
  title={Stochastic foraging paths primarily drives within-species variations of prey consumption rates},
  author={Bansaye, V. and Berthelot, G. and El Bachari, A. and Chazottes, J.-R. and Billiard, S.},
  journal={bioRxiv},
  year={2024},
  url={https://www.biorxiv.org/content/early/2024/06/02/2024.05.29.596370.}
}

@misc{Baccara26,
      title={On the range of competing random walks}, 
      author={Baccara, M.},
      year={2026},
      eprint={2607.02406},
      archivePrefix={arXiv},
      primaryClass={math.PR},
      url={https://arxiv.org/abs/2607.02406}, 
}

@article{Larralde92,
  title={Territory covered by N diffusing particles},
  author={Larralde, H. and Trunfio, P. and Havlin, S. and Stanley, H.~E. and Weiss, G.~H.},
  journal={Nature},
  volume={355},
  number={6359},
  pages={423--426},
  year={1992},
  publisher={Nature Publishing Group UK London}
}

@article{benichou2014depletion,
  title={Depletion-controlled starvation of a diffusing forager},
  author={B{\'e}nichou, O. and Redner, S.},
  journal={Physical Review Letters},
  volume={113},
  number={23},
  pages={238101},
  year={2014},
  publisher={APS}
}

@article {BCR09,
    AUTHOR = {Bass, R.~F. and Chen, X. and Rosen, J.},
     TITLE = {Moderate deviations for the range of planar random walks},
   JOURNAL = {Mem. Amer. Math. Soc.},
  FJOURNAL = {Memoirs of the American Mathematical Society},
    VOLUME = {198},
      YEAR = {2009},
    NUMBER = {929},
     PAGES = {viii+82},
      ISSN = {0065-9266,1947-6221},
      ISBN = {978-0-8218-4287-4},
   MRCLASS = {60F10 (60G50 60J55)},
  MRNUMBER = {2493313},
MRREVIEWER = {Endre\ Cs\'aki},
       DOI = {10.1090/memo/0929},
}

@mastersthesis {Dicker06,
	title={Coloring a d$\,\ge\,$3 dimensional lattice with two independent random walks},
  	author={Dicker, L.~},
  	year={2006},
  	school={Univ.~Pennsylvania},
	note={Available at \href{https://www.math.upenn.edu/~pemantle/papers/Student-theses/Masters/Dicker060421.pdf}{https://www.math.upenn.edu/$\sim$pemantle/papers/Student-theses/Masters/Dicker060421.pdf}},
}

@article {GLDSH96,
    AUTHOR = {Gomes Jr., S. R. and Lucena, L. S. and da Silva, L. R. and Hilhorst, H. J.},
     TITLE = {Coloring of a one-dimensional lattice by two independent random walkers},
   JOURNAL = {Phys. A},
  FJOURNAL = {Physica A},
    VOLUME = {225},
      YEAR = {1996},
    NUMBER = {1},
     PAGES = {81--88},
      ISSN = {0378-4371},
   MRCLASS = {82B41 (60J15)},
  MRNUMBER = {1379999},
MRREVIEWER = {Stuart G. Whittington},
       DOI = {10.1016/0378-4371(95)00424-6},
}

@article {Miller13,
    AUTHOR = {Miller, Jason},
     TITLE = {Painting a graph with competing random walks},
   JOURNAL = {Ann. Probab.},
  FJOURNAL = {The Annals of Probability},
    VOLUME = {41},
      YEAR = {2013},
    NUMBER = {2},
     PAGES = {636--670},
      ISSN = {0091-1798},
   MRCLASS = {60G50 (60F99)},
  MRNUMBER = {3077521},
MRREVIEWER = {Konstantin Borovkov},
       DOI = {10.1214/11-AOP713},
}

@article {TW11,
    AUTHOR = {Teixeira, Augusto and Windisch, David},
     TITLE = {On the fragmentation of a torus by random walk},
   JOURNAL = {Comm. Pure Appl. Math.},
  FJOURNAL = {Communications on Pure and Applied Mathematics},
    VOLUME = {64},
      YEAR = {2011},
    NUMBER = {12},
     PAGES = {1599--1646},
      ISSN = {0010-3640},
   MRCLASS = {05C81 (05D40 60G50 60J20)},
  MRNUMBER = {2838338},
       DOI = {10.1002/cpa.20382},
}

@article {Sznitman10,
    AUTHOR = {Sznitman, Alain-Sol},
     TITLE = {Vacant set of random interlacements and percolation},
   JOURNAL = {Ann. of Math. (2)},
  FJOURNAL = {Annals of Mathematics. Second Series},
    VOLUME = {171},
      YEAR = {2010},
    NUMBER = {3},
     PAGES = {2039--2087},
      ISSN = {0003-486X},
   MRCLASS = {60K35},
  MRNUMBER = {2680403},
MRREVIEWER = {Ingemar Kaj},
       DOI = {10.4007/annals.2010.171.2039},
}

@article {JO68,
    AUTHOR = {Jain, N. and Orey, S.},
     TITLE = {On the range of random walk},
   JOURNAL = {Israel J. Math.},
  FJOURNAL = {Israel Journal of Mathematics},
    VOLUME = {6},
      YEAR = {1968},
     PAGES = {373--380},
      ISSN = {0021-2172},
   MRCLASS = {60.66},
  MRNUMBER = {243623},
       DOI = {10.1007/BF02771217},
}

@article {JP70clt,
    AUTHOR = {Jain, Naresh C. and Pruitt, William E.},
     TITLE = {The central limit theorem for the range of transient random walk},
   JOURNAL = {Bull. Amer. Math. Soc.},
  FJOURNAL = {Bulletin of the American Mathematical Society},
    VOLUME = {76},
      YEAR = {1970},
     PAGES = {758--759},
      ISSN = {0002-9904},
   MRCLASS = {60.66},
  MRNUMBER = {258135},
MRREVIEWER = {V. K. Rohatgi},
       DOI = {10.1090/S0002-9904-1970-12536-8},
}

@article {JP70recur,
    AUTHOR = {Jain, Naresh C. and Pruitt, William E.},
     TITLE = {The range of recurrent random walk in the plane},
   JOURNAL = {Z. Wahrscheinlichkeitstheorie und Verw. Gebiete},
  FJOURNAL = {Zeitschrift f\"{u}r Wahrscheinlichkeitstheorie und Verwandte Gebiete},
    VOLUME = {16},
      YEAR = {1970},
     PAGES = {279--292},
   MRCLASS = {60.66},
  MRNUMBER = {281266},
MRREVIEWER = {H. Kesten},
       DOI = {10.1007/BF00535133},
}

@article {JP71,
    AUTHOR = {Jain, Naresh C. and Pruitt, William E.},
     TITLE = {The range of transient random walk},
   JOURNAL = {J. Analyse Math.},
  FJOURNAL = {Journal d'Analyse Math\'{e}matique},
    VOLUME = {24},
      YEAR = {1971},
     PAGES = {369--393},
      ISSN = {0021-7670},
   MRCLASS = {60.66},
  MRNUMBER = {283890},
MRREVIEWER = {J. R. Kinney},
       DOI = {10.1007/BF02790380},
}

@article {LeGall,
    AUTHOR = {Le Gall, J.-F.},
     TITLE = {Propri\'{e}t\'{e}s d'intersection des marches al\'{e}atoires. {I}.  {C}onvergence vers le temps local d'intersection},
   JOURNAL = {Comm. Math. Phys.},
  FJOURNAL = {Communications in Mathematical Physics},
    VOLUME = {104},
      YEAR = {1986},
    NUMBER = {3},
     PAGES = {471--507},
      ISSN = {0010-3616},
   MRCLASS = {60J15 (82A05)},
  MRNUMBER = {840748},
MRREVIEWER = {F. L. Spitzer},
}

@inproceedings {DE51,
    AUTHOR = {Dvoretzky, A. and Erd\"{o}s, P.},
     TITLE = {Some problems on random walk in space},
 BOOKTITLE = {Proceedings of the {S}econd {B}erkeley {S}ymposium on {M}athematical {S}tatistics and {P}robability, 1950},
     PAGES = {353--367},
 PUBLISHER = {University of California Press, Berkeley and Los Angeles},
      YEAR = {1951},
   MRCLASS = {60.0X},
  MRNUMBER = {0047272},
MRREVIEWER = {S. Kakutani},
}
